\begin{document}

\newcommand{\commA}[2][]{\todo[#1,color=yellow]{A: #2}}
\newcommand{\commI}[2][]{\todo[#1,color=green!60]{I: #2}}
    
\newtheorem{theorem}{Theorem}
\newtheorem{lemma}[theorem]{Lemma}
\newtheorem{example}[theorem]{Example}
\newtheorem{algol}{Algorithm}
\newtheorem{corollary}[theorem]{Corollary}
\newtheorem{prop}[theorem]{Proposition}
\newtheorem{definition}[theorem]{Definition}
\newtheorem{question}[theorem]{Question}
\newtheorem{problem}[theorem]{Problem}
\newtheorem{remark}[theorem]{Remark}
\newtheorem{conjecture}[theorem]{Conjecture}

\def\xxx{\vskip5pt\hrule\vskip5pt}

\def\Cmt#1{\underline{{\sl Comments:}} {\it{#1}}}

\newcommand{\Modp}[1]{
\begin{color}{blue}
 #1\end{color}}
 
 \def\bl#1{\begin{color}{blue}#1\end{color}} 
 \def\red#1{\begin{color}{red}#1\end{color}} 


\def\cA{{\mathcal A}}
\def\cB{{\mathcal B}}
\def\cC{{\mathcal C}}
\def\cD{{\mathcal D}}
\def\cE{{\mathcal E}}
\def\cF{{\mathcal F}}
\def\cG{{\mathcal G}}
\def\cH{{\mathcal H}}
\def\cI{{\mathcal I}}
\def\cJ{{\mathcal J}}
\def\cK{{\mathcal K}}
\def\cL{{\mathcal L}}
\def\cM{{\mathcal M}}
\def\cN{{\mathcal N}}
\def\cO{{\mathcal O}}
\def\cP{{\mathcal P}}
\def\cQ{{\mathcal Q}}
\def\cR{{\mathcal R}}
\def\cS{{\mathcal S}}
\def\cT{{\mathcal T}}
\def\cU{{\mathcal U}}
\def\cV{{\mathcal V}}
\def\cW{{\mathcal W}}
\def\cX{{\mathcal X}}
\def\cY{{\mathcal Y}}
\def\cZ{{\mathcal Z}}

\def\C{\mathbb{C}}
\def\F{\mathbb{F}}
\def\K{\mathbb{K}}
\def\L{\mathbb{L}}
\def\G{\mathbb{G}}
\def\Z{\mathbb{Z}}
\def\R{\mathbb{R}}
\def\Q{\mathbb{Q}}
\def\N{\mathbb{N}}
\def\M{\textsf{M}}
\def\U{\mathbb{U}}
\def\P{\mathbb{P}}
\def\A{\mathbb{A}}
\def\fp{\mathfrak{p}}
\def\n{\mathfrak{n}}
\def\X{\mathcal{X}}
\def\x{\textrm{\bf x}}
\def\w{\textrm{\bf w}}
\def\a{\textrm{\bf a}}
\def\k{\textrm{\bf k}}
\def\ee{\textrm{\bf e}}
\def\ovQ{\overline{\Q}}
\def \Kab{\K^{\mathrm{ab}}}
\def \Qab{\Q^{\mathrm{ab}}}
\def \Qtr{\Q^{\mathrm{tr}}}
\def \Kc{\K^{\mathrm{c}}}
\def \Qc{\Q^{\mathrm{c}}}
\newcommand \rank{\operatorname{rk}}
\def\ZK{\Z_\K}
\def\ZKS{\Z_{\K,\cS}}
\def\ZKSf{\Z_{\K,\cS_f}}
\def\ZKSfG{\Z_{\K,\cS_{f,\Gamma}}}

\def\bF{\mathbf {F}}

\def\({\left(}
\def\){\right)}
\def\[{\left[}
\def\]{\right]}
\def\<{\langle}
\def\>{\rangle}

\def\gen#1{{\left\langle#1\right\rangle}}
\def\genp#1{{\left\langle#1\right\rangle}_p}
\def\genPs{{\left\langle P_1, \ldots, P_s\right\rangle}}
\def\genPsp{{\left\langle P_1, \ldots, P_s\right\rangle}_p}

\def\e{e}

\def\eq{\e_q}
\def\fh{{\mathfrak h}}

\def\lcm{{\mathrm{lcm}}\,}

\def\({\left(}
\def\){\right)}
\def\fl#1{\left\lfloor#1\right\rfloor}
\def\rf#1{\left\lceil#1\right\rceil}
\def\mand{\qquad\mbox{and}\qquad}

\def\jt{\tilde\jmath}
\def\ellmax{\ell_{\rm max}}
\def\llog{\log\log}

\def\m{{\rm m}}
\def\ch{\hat{h}}
\def\GL{{\rm GL}}
\def\Orb{\mathrm{Orb}}
\def\Per{\mathrm{Per}}
\def\Preper{\mathrm{Preper}}
\def \S{\mathcal{S}}
\def\vec#1{\mathbf{#1}}
\def\ov#1{{\overline{#1}}}
\def\Gal{{\mathrm Gal}}
\def\Sp{{\mathrm S}}
\def\tors{\mathrm{tors}}
\def\PGL{\mathrm{PGL}}
\def\wH{{\rm H}}
\def\Gm{\G_{\rm m}}

\def\house#1{{%
    \setbox0=\hbox{$#1$}
    \vrule height \dimexpr\ht0+1.4pt width .5pt depth \dp0\relax
    \vrule height \dimexpr\ht0+1.4pt width \dimexpr\wd0+2pt depth \dimexpr-\ht0-1pt\relax
    \llap{$#1$\kern1pt}
    \vrule height \dimexpr\ht0+1.4pt width .5pt depth \dp0\relax}}

\newcommand{\bfalpha}{{\boldsymbol{\alpha}}}
\newcommand{\bfomega}{{\boldsymbol{\omega}}}

\newcommand{\Ch}{{\operatorname{Ch}}}
\newcommand{\Elim}{{\operatorname{Elim}}}
\newcommand{\proj}{{\operatorname{proj}}}
\newcommand{\h}{{\operatorname{\mathrm{h}}}}
\newcommand{\ord}{\operatorname{ord}}

\newcommand{\hh}{\mathrm{h}}
\newcommand{\aff}{\mathrm{aff}}
\newcommand{\Spec}{{\operatorname{Spec}}}
\newcommand{\Res}{{\operatorname{Res}}}

\def\fA{{\mathfrak A}}
\def\fB{{\mathfrak B}}

\numberwithin{equation}{section}
\numberwithin{theorem}{section}

\title{On semigroup orbits of polynomials in subgroups}

\author[Jorge Mello]{Jorge Mello}

\address{University of New South Wales. mailing adress:\newline School of Mathematics and Statistics
UNSW Sydney 
NSW, 2052
Australia.} 

\email{j.mello@unsw.edu.au}

 \keywords{} 

\begin{abstract} We study intersections of semigroup orbits in polynomial dynamics with multiplicative subgroups,  extending results of Ostafe and Shparlinski (2010). 
\end{abstract}

\maketitle
\section{Introduction}
Let $K$ be a field of charateristic $0$ and $\overline{K}$ its algebraic closure. Let $\mathcal{F}= \{ \phi_1,..., \phi_k \} \subset K[X]$ be a set of polynomials of degree at least $2$, let $x \in K$, and let 
\begin{center}$\mathcal{O}_{\mathcal{F}}(x)= \{ \phi_{i_n} \circ ... \circ \phi_{i_1}(x) | n \in \mathbb{N} , i_j =1,...,k. \}$\end{center} denote the forward orbit of $P$ under $\mathcal{F}$.
We denote the $n$-dimensional torus $\mathbb{G}^n_m$ as $(\overline{\mathbb{Q}}^*)^n$ endowed with the  group law defined by the multiplication coordinate by coordinate.

For $\mathcal{S} \subset K$ reasonably sparse and somehow unrelated to $\mathcal{F}$, it is natural to study the intersection $\mathcal{O}_{\mathcal{F}}(x) \cap \mathcal{S}$. A generalisation of this situation is to study the intersection of orbits generated by multivariate polynomials with higher dimensional algebraic varieties. This is known as the \textit{dynamical Mordell-Lang conjecture}, for which we refer \cite{BGT}. In the univariate case, when $\mathcal{S}= \mathbb{U}$ is the set of roots of unity and the initial points are defined over the cyclotomic closure $K^c:=K(\mathbb{U})$ over an algebraic number field,  Ostafe \cite{O} has proved finiteness for such points that are preperiodic for the initial polynomial.

When $k=1$ and $\mathcal{S} \subset K$ has certain multiplicative properties in the univariate case ( e.g. a finitely generated group $\Gamma \subset K^*$) Ostafe and Shparlinski \cite{OS} have provided results  for the frequency of intersections of polynomial orbits with such sets. Namely, they have proved that
\begin{center}
 $\# \{ n \leq N : f^{(n)}(x) \in \Gamma\} \leq \dfrac{(10 \log \deg f + o(1))N}{\log \log N}$, as $N \rightarrow \infty$,
\end{center} for $f \in K[X], x \in K$.

In this paper we seek to generalise results of this sort when the dynamical systems are generated as  semigroups under composition by several maps initially. Precisely, putting $\mathcal{F}_n= \{\phi_{i_n}\circ ... \circ \phi_{i_1}| 1 \leq i_j \leq k  \}$ for the $n$-level set, and supposing that $\{t_N\}_N^{+\infty}$ is a sequence of positive integers going to $\infty$  satisfying that
\begin{center}
  $\# \{ v \in  \Gamma  | v = f(u), f \in \mathcal{F}_n, n \leq N   \} \geq ck^{t_N}$
 \end{center} for each $N$, where $c>0$ is a constant,
 we prove among other results that
\begin{center}
$\# \{ v \in  \Gamma  | v = f(u), f \in \mathcal{F}_n, n \leq N   \} \leq  \exp ( \exp((10 \log d  + o(1))t_N)   $, 
\end{center} as $N \rightarrow \infty $, where $d = \max_i \deg \phi_i$. Namely, if the number of orbit points of iteration order at most $N$ falling on a finitely generated group is bigger than a multiple of the size of the complete $k$-tree of depth $t_N-1$, then such pursued number grows slower than a sequence obtained by exponentiating twice a multiple of the sequence $\{t_N\}$. 

In particular, if our conditions are satisfied with

\begin{center}
$t_N \sim\dfrac{1}{10 \log d + o(1)} \log \log \left(  \dfrac{(10 \log d + o(1)) N)}{\log \log N} \right)$, as $ N \rightarrow \infty$,
\end{center} then we can generalize and recover the results of \cite{OS} under such conditions.

In Section~\ref{sec2} we set some notations and facts about heights, orbits and finitely generated groups. In Section~\ref{sec3} we recall some  arithmetic and combinatoric results that are used to obtain results of frequency with orbits generated by a sequence of maps in Section~\ref{sec4}. In Section~\ref{sec5}  we state a necessary recent graph theory result that is used in Section~\ref{sec6}  to obtain results about the frequency of intersection of polynomial semigroup orbits with sets.

\section{Preliminar notations} \label{sec2} Let $K$ be a field of charateristic $0$ and $\overline{K}$ its algebraic closure. For $x \in \overline{\mathbb{Q}} $, the naive logarithmic height $h(x)$ is given by 
\begin{center}$ \sum_{v \in M_K} \dfrac{[K_v: \mathbb{Q}_v]}{[K:\mathbb{Q}]} \log(\max \{1, |x|_v\}$, \end{center}
where $M_K$ is the set of places of $K$, $M_K^\infty$ is the set of archimedean (infinite) places of $K$, $M_K^0$ is the set of nonarchimedean (finite) places of $K$, and for each $v \in M_K$, $|.|_v$ denotes the corresponding absolute value on $K$ whose restriction to $\mathbb{Q}$ gives the usual $v$-adic absolute value on $\mathbb{Q}$.
Also, we write $K_v$ for the completion of $K$ with respect to $|.|$, and we let $\mathbb{C}_v$ denote the completion of an algebraic closure of $K_v$. To simplify notation, we let $d_v=[K_v:\mathbb{Q}_v]/[K:\mathbb{Q}]$.
Let $\mathcal{F}= \{ \phi_1,..., \phi_k \} \subset K[X]$ be a set of polynomials of degree at least $2$, let $x \in K$, and let $\mathcal{O}_{\mathcal{F}}(x)= \{ \phi_{i_n} \circ ... \circ \phi_{i_1}(x) | n \in \mathbb{N} , i_j =1,...,k. \}$ denote the forward orbit of $P$ under $\mathcal{F}$.
We denote the $n$-dimensional torus $\mathbb{G}^n_m$ as $(\overline{\mathbb{Q}}^*)^n$ endowed with the  group law defined by the multiplication coordinate by coordinate.
\begin{definition}
 A polynomial $F \in \overline{\mathbb{Q}}[X,Y]$ is said to be special if it has a factor of the form $aX^mY^n - b$ or $aX^m - bY^n$ for some $a, b \in \overline{\mathbb{Q}}$ and $m,n \geq 0$. Otherwise we call $F$  to be non-special.
\end{definition}

\begin{definition}
 For a finitely generated group $\Gamma \subset \mathbb{G}_m^n$, we define the division group $\overline{\Gamma}$ by
\begin{center}
 $\overline{\Gamma}= \{ x \in \mathbb{G}_m^n | \exists t \in \mathbb{N}$ with $ x^t \in \Gamma \}$.
\end{center}

 \end{definition}

\begin{definition}
 For $E, \epsilon \geq 0$ and a set $\mathcal{S} \subset \mathbb{G}_m^n$, we define the sets 
 \begin{center}
  $\mathscr{B}_n(\mathcal{S}, E)= \{ x \in \mathbb{G}_m^n | \exists y, z \in \mathbb{G}_m^n $ with  $ x= yz, y \in \mathcal{S}, h(z) \leq E \}$
 \end{center} and 
 \begin{align*}
  \mathscr{C}_n(\mathcal{S}, \epsilon)= \{ x \in \mathbb{G}_m^n | \exists y, z \in \mathbb{G}_m^n \text{ with }  x= yz, y \in \mathcal{S}, h(z) \leq \epsilon(1+ h(y)) \}.
 \end{align*}

\end{definition}

We also omit the subscript $n$ for $n=1$ writing 
\begin{center}
 $\mathscr{B}(\mathcal{S}, E)= \mathscr{B}_1(\mathcal{S}, E)$ and $\mathscr{C}(\mathcal{S}, \epsilon)=\mathscr{C}_1(\mathcal{S}, \epsilon)$.
\end{center}

We also write $\mathscr{A}(K, H)$ for the set of elements in the field of height at most $H$, namely
\begin{center}
 $\mathscr{A}(K, H)= \{ x \in \overline{K}^* | h(x) \leq H  \}$.
\end{center} 
For $\mathcal{F}= \{\phi_1,..., \phi_k \}$, we set 
\begin{center}$J=\{ 1,...,k \}, \quad W= \prod_{i=1}^\infty J$, \quad and \quad $\Phi_w:=(\phi_{w_j})_{j=1}^\infty$\end{center}
 to be a sequence of polynomials from $\mathcal{F}$ for $w= (w_j)_{j=1}^\infty \in W$. 

In this situation we let \begin{center}$\Phi_w^{(n)}=\phi_{w_n} \circ ... \circ \phi_{w_1}$ with $\Phi_w^{(0)}=$Id, 
and also $\mathcal{F}_n :=\{ \Phi_w^{(n)} | w \in W \}$.\end{center}

Precisely, we consider polynomials sequences $\Phi$ $= (\phi_{i_j})_{j=1}^\infty \in \prod_{i=1}^\infty \mathcal{F}$ and $x \in \overline{K}$,
denoting $\Phi^{(n)}(x):=\phi_{i_n}(\phi_{i_{n-1}}(...(\phi_{i_1}(x)))$.

The set \begin{align*}\{ x, \Phi^{(1)}(x),  \Phi^{(2)}(x),  \Phi^{(3)}(x),... \} 
 =\{ x, \phi_{i_1}(x), \phi_{i_2}(\phi_{i_1}(x)), \phi_{i_3}(\phi_{i_2}(\phi_{i_1}(x)),... \}\end{align*} is called the forward orbit of $x$ under $\Phi$, denoted by
$\mathcal{O}_{\Phi} (x)$. 

The point $x$ is said to be $\Phi$-preperiodic if $\mathcal{O}_{\Phi} (x)$ is finite.

 For a $x \in K$, the $\mathcal{F}$-orbit of $x$ is defined as 
\begin{align*}
 \mathcal{O}_{\mathcal{F}}(x)=\{ \phi(x) | \phi \in \bigcup_{n \geq 1} \mathcal{F}_n \}= \{ \Phi_w^{(n)}(x) | n \geq 0, w \in W \} = \bigcup_{w \in W} \mathcal{O}_{\Phi_w} (x).
 \end{align*}
 The point $x$ is called preperiodic for $\mathcal{F}$ if $\mathcal{O}_{\mathcal{F}}(x)$ is finite.
 
 For $\mathcal{S} \subset K$ and an integer $N \geq 1$, we use $T_{x,\Phi}(N, \mathcal{S})$ to denote the number of $n \leq N$ with $\Phi^{(n)}(w) \in \mathcal{S}$, namely, 
 \begin{center}
  $T_{x,\Phi}(N, \mathcal{S}) = \# \{ n \leq N | \Phi^{(n)}(x) \in \mathcal{S} \}$.
 \end{center}
 For $f = \sum_{i=0}^d a_i X^i \in \overline{\mathbb{Q}}[X]$ and $K$ a field containing all the coefficients of $f$, denote the weil height of $f$ by 
 \begin{center}
  $h(f) = \sum_{v \in M_K} d_v \log(\max_i |a_i|_v)$,
 \end{center} and for the system of polynomials $\mathcal{F}= \{ \phi_1,..., \phi_k \}$, denote $h(\mathcal{F})=\max_i h({\phi_i})$. 
 
 We revisit the following bound calculated in other works, for example, \cite[Proposition 3.3]{M}.

 \begin{prop}\label{prop2.4}
  Let $\mathcal{F}= \{ \phi_1,..., \phi_k \}$ be a finite set of polynomials over $K$ with $\deg \phi_i= d_i \geq 2$, and $d:= \max_i d_i$. Then for all $n \geq 1$ and $\phi \in \mathcal{F}_n$, we have
  \begin{center}
   $h(\phi) \leq \left(\dfrac{d^n-1}{d-1}\right)h(\mathcal{F}) + d^2\left(\dfrac{d^{n-1}-1}{d-1}\right)\log 8= O(d^n(h(\mathcal{F})+1))$.
  \end{center}
 \end{prop}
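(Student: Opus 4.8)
The plan is to prove this by induction on the composition length $n$, tracking how the height of a composition grows when we pre-compose a degree-$d^{n-1}$-or-smaller map with a single $\phi_i$ from $\mathcal F$. The key elementary input is a height bound for the composition of two polynomials: if $g,h\in\overline{\mathbb Q}[X]$ with $\deg g = e$, then $h(g\circ h) \le h(g) + e\,h(h) + (\text{combinatorial error term})$, where the error term comes from bounding the Weil height of a sum/product of polynomials. Concretely, writing $g = \sum_{j=0}^{e} b_j X^j$, one has $g\circ h = \sum_j b_j h^j$, and the standard inequalities $h(h^j)\le j\,h(h) + (\text{something like }(j-1)(\deg h)\log 2)$ together with $h\!\left(\sum_{j=0}^{e} c_j\right)\le \sum_j h(c_j) + \log(e+1)$ give the claimed shape with an error controlled by $d^2\log 8$ (the constant $8$ absorbs the $\log 2$ from polynomial multiplication and the $\log(e+1)$ from summing). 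I would first isolate this two-map estimate as the base computation.

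Next I would set up the recursion. Write $\phi = \psi\circ\phi_{w_1}$ where $\psi = \phi_{w_n}\circ\cdots\circ\phi_{w_2}\in\mathcal F_{n-1}$, so $\deg\psi\le d^{n-1}$; applying the two-map bound with $g=\psi$, $h=\phi_{w_1}$ yields
\[
h(\phi)\;\le\; h(\psi) + d^{n-1}\,h(\phi_{w_1}) + d^{2}\,d^{n-1}\log 8 \;\le\; h(\psi) + d^{n-1}h(\mathcal F) + d^{n+1}\log 8.
\]
With the induction hypothesis $h(\psi)\le \frac{d^{n-1}-1}{d-1}h(\mathcal F) + d^2\frac{d^{n-2}-1}{d-1}\log 8$, summing the geometric progressions gives exactly
\[
h(\phi)\;\le\;\frac{d^{n}-1}{d-1}h(\mathcal F) + d^2\,\frac{d^{n-1}-1}{d-1}\log 8,
\]
and the base case $n=1$ is immediate since $h(\phi_{w_1})\le h(\mathcal F)$ and the error term vanishes. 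The final $O(d^n(h(\mathcal F)+1))$ assertion then follows by bounding $\frac{d^n-1}{d-1}\le d^n$ and $d^2\frac{d^{n-1}-1}{d-1}\le d^{n+1}$ and absorbing the constant $\log 8$.

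The main technical obstacle — really the only place requiring care — is getting the error constant right in the two-map bound, i.e. verifying that the accumulated contributions from $j$-fold products $h(h^j)$ and from the $(e{+}1)$-term sum are jointly dominated by $d^2\log 8$ per level rather than something larger. This is a matter of tracking the place-by-place inequalities (Gauss's lemma at finite places gives no loss; at archimedean places one picks up a bounded multiplicative constant per multiplication, and the number of monomials contributes the $\log(\deg+1)$ term), and choosing the constant $8$ generously enough to swallow all of it uniformly in $n$ and in the $d_i$. Since the statement only claims an inequality and cites \cite[Proposition 3.3]{M} for the precise form, I would present the recursion cleanly and quote the standard composition-height lemma for the one-step estimate, leaving the bookkeeping of the constant to the reference.
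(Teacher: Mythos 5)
The paper itself gives no argument for Proposition~\ref{prop2.4} — it simply quotes the bound from \cite[Proposition 3.3]{M} — so there is no in-paper proof to compare against; your strategy (induction on $n$, peeling off the innermost map and applying a one-step composition-height estimate) is the standard route and is essentially what the cited source does. However, as written your numbers do not reproduce the stated constant, and the slip sits exactly at the point you yourself flag as the only delicate one. You take the one-step bound $h(\psi\circ\phi_{w_1})\le h(\psi)+d^{n-1}h(\phi_{w_1})+d^{2}d^{n-1}\log 8$, i.e.\ a per-step error of $d^{n+1}\log 8$. Feeding this into the recursion $H_n=H_{n-1}+d^{n-1}h(\mathcal F)+d^{n+1}\log 8$ with $H_1=h(\mathcal F)$ and summing the geometric series gives
\[
h(\phi)\le \frac{d^{n}-1}{d-1}\,h(\mathcal F)+\log 8\,\frac{d^{n+2}-d^{3}}{d-1}
=\frac{d^{n}-1}{d-1}\,h(\mathcal F)+d^{3}\,\frac{d^{n-1}-1}{d-1}\log 8,
\]
so your claim that the sum ``gives exactly'' the Proposition is off by a factor of $d$ in the second term. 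What the stated constant actually requires is the sharper one-step estimate $h(g\circ h)\le h(g)+(\deg g)\,h(h)+(\deg g)(\deg h)\log 8$, applied with $g=\psi$ (degree $\le d^{n-1}$) and $h=\phi_{w_1}$ (degree $\le d$), so that the per-step error is $d^{n-1}\cdot d\log 8=d^{n}\log 8$; then $\sum_{m=2}^{n}d^{m}\log 8=d^{2}\frac{d^{n-1}-1}{d-1}\log 8$ and the induction closes with precisely the claimed inequality. Such a bound is consistent with your own local analysis: nonarchimedean places lose nothing by Gauss's lemma, while at archimedean places $\|h^{j}\|_v\le(\deg h+1)^{j-1}\|h\|_v^{j}$ gives a total loss at most $\log\bigl((e+1)(D+1)^{e-1}\bigr)\le eD\log 8$ with $e=\deg g$, $D=\deg h\ge 1$ — linear, not quadratic, in $\deg h$.

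To be clear about the severity: your weaker per-step constant still yields the asymptotic conclusion $O(d^{n}(h(\mathcal F)+1))$, so the qualitative use of the Proposition elsewhere in the paper is unaffected; but the explicit inequality with $d^{2}\frac{d^{n-1}-1}{d-1}\log 8$ is the actual content of the statement, and your argument as written does not establish it. The repair is local: replace the per-step error $d^{2}d^{n-1}\log 8$ by $(\deg\psi)(\deg\phi_{w_1})\log 8\le d^{n}\log 8$ (i.e.\ quote the composition lemma in the form with error $(\deg g)(\deg h)\log 8$, which is the form used in \cite{M}), and redo the now-correct geometric sum; the base case $n=1$ is fine as you have it.
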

 The following is an easy consequence of \cite[Corollary 2.3]{OS}.
 \begin{prop}
  Let $K$ be an number field and $\mathcal{F}= \{ \phi_1,..., \phi_k \} \subset K[X]$ a dynamical system of polynomials. Let also $g \in k[X]$ be such that $g, g \circ \phi_1,..., g\circ \phi_k$ have at least two distinct roots in $\overline{\mathbb{Q}}$.
Then, for every finitely generated subgroup $\Gamma \subset K^*, x \in \overline{\mathbb{Q}}, E >0$, we have that
 \begin{center}
  $\mathcal{O}_{\mathcal{F}}(x) \cap g^{-1}(\mathscr{B}(\overline{\Gamma},E))$
 \end{center} is finite.
 \end{prop}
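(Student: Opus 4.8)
The plan is to derive the statement from \cite[Corollary 2.3]{OS} by peeling off one composition factor at a time and then invoking Northcott's theorem. The form of \cite[Corollary 2.3]{OS} that I would use is that, for a polynomial $f\in\overline{\mathbb{Q}}[X]$ with at least two distinct roots and a finitely generated $\Gamma\subset\overline{\mathbb{Q}}^{*}$, the preimage $f^{-1}(\mathscr{B}(\overline{\Gamma},E))$ is a set of bounded height: there is a constant $C(f,\Gamma,E)$ with $h(\beta)\le C(f,\Gamma,E)$ whenever $f(\beta)\in\mathscr{B}(\overline{\Gamma},E)$. The reason one cannot apply this directly to an orbit point is that a point $\Phi_w^{(n)}(x)$ of a semigroup orbit is not an iterate of a single polynomial; the remedy is to apply the corollary only to the last map $\phi_{w_n}$ that was used.

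First I would record the elementary observation that $\mathcal{O}_{\mathcal{F}}(x)\subset L:=K(x)$, a fixed number field, since every $\Phi_w^{(n)}(x)$ is obtained from the algebraic number $x$ by applying polynomials with coefficients in the number field $K$. Next, since by hypothesis each of $g\circ\phi_1,\dots,g\circ\phi_k$ has at least two distinct roots, I would apply \cite[Corollary 2.3]{OS} to each of these $k$ polynomials, obtaining constants $C_1,\dots,C_k$ (depending only on $g$, $\mathcal{F}$, $\Gamma$, $E$) such that $(g\circ\phi_i)(\beta)\in\mathscr{B}(\overline{\Gamma},E)$ forces $h(\beta)\le C_i$. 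Put $C:=\max(C_1,\dots,C_k,h(x))$ and let $F:=\{\beta\in L:h(\beta)\le C\}$, which is finite by Northcott's theorem.

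Now let $P\in\mathcal{O}_{\mathcal{F}}(x)$ with $g(P)\in\mathscr{B}(\overline{\Gamma},E)$; note $g(P)\ne 0$ automatically, since $\mathscr{B}(\overline{\Gamma},E)\subset\overline{\mathbb{Q}}^{*}$. If $P=x$, it contributes a single point. Otherwise $P=\Phi_w^{(n)}(x)$ for some $w\in W$ and $n\ge 1$; writing $Q:=\Phi_w^{(n-1)}(x)\in\mathcal{O}_{\mathcal{F}}(x)\subset L$ we get $P=\phi_{w_n}(Q)$, hence $(g\circ\phi_{w_n})(Q)=g(P)\in\mathscr{B}(\overline{\Gamma},E)$, so $h(Q)\le C_{w_n}\le C$ and therefore $Q\in F$. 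Consequently
\[
\mathcal{O}_{\mathcal{F}}(x)\cap g^{-1}\bigl(\mathscr{B}(\overline{\Gamma},E)\bigr)\ \subseteq\ \{x\}\cup\{\phi_i(\beta):1\le i\le k,\ \beta\in F\},
\]
and the right-hand side is finite because $F$ is finite and $k<\infty$. This is the desired conclusion.

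I do not expect a genuine obstacle once \cite[Corollary 2.3]{OS} is granted — the argument is essentially bookkeeping. The two points requiring a little care are the uniformity over the finitely many shifted polynomials $g\circ\phi_i$ and the remark that the orbit never leaves the fixed number field $L=K(x)$, which is precisely what allows Northcott's theorem to upgrade the height bound to finiteness. (The hypothesis that $g$ itself has at least two distinct roots is already enough to guarantee the same for every $g\circ\phi_i$: if $a\ne b$ are roots of $g$, choose $\alpha,\beta$ with $\phi_i(\alpha)=a$ and $\phi_i(\beta)=b$, so $\alpha\ne\beta$ are distinct roots of $g\circ\phi_i$; thus the conditions on the $g\circ\phi_i$ in the statement are there mainly for emphasis, and the argument above uses exactly those conditions.)
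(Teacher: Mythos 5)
The paper itself gives no proof of this proposition beyond the sentence that it ``is an easy consequence of \cite[Corollary 2.3]{OS}'', so the only thing to assess is the form in which you invoke that corollary, and that is where your argument breaks. The statement you attribute to it --- that for a polynomial $f$ with at least two distinct roots the full preimage $f^{-1}(\mathscr{B}(\overline{\Gamma},E))$ inside $\overline{\mathbb{Q}}$ is a set of bounded height --- is false, so whatever \cite[Corollary 2.3]{OS} asserts, it cannot be that. Indeed $\mathscr{B}(\overline{\Gamma},E)$ contains $\Gamma$, whose elements have unbounded height, and a single membership condition does not control the height of the argument: take $g(X)=X^{2}-X$ (two distinct roots $0,1$), $\Gamma=\langle 2\rangle\subset\mathbb{Q}^{*}$, and $u_{m}=\bigl(1+\sqrt{1+2^{m+2}}\bigr)/2$; then $g(u_{m})=2^{m}\in\Gamma\subset\mathscr{B}(\overline{\Gamma},E)$, while $m\log 2=h(g(u_{m}))\le 2h(u_{m})+O(1)$ forces $h(u_{m})\to\infty$. (The analogous preimage statement is trivially true when $\Gamma$ is replaced by the roots of unity, because $\mathscr{B}(\mathbb{U},E)$ itself has height at most $E$; that intuition does not transfer to finitely generated groups, and this seems to be the source of the slip.) Consequently the constants $C_{1},\dots,C_{k}$ in your argument need not exist and the peeling-off step collapses.

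The parts of your write-up that are routine are fine: the orbit stays in the fixed number field $K(x)$, Northcott converts a height bound into finiteness, and $g\circ\phi_{i}$ does inherit two distinct roots from $g$. But the hypotheses on all the compositions $g\circ\phi_{i}$ are not decorative; they mirror the hypotheses ``$g$ and $g\circ f$ have at least two distinct roots'' in \cite{OS}, and they signal that the genuine arithmetic input concerns either points of a \emph{fixed} number field (where boundedness of $\{u\in K(x): G(u)\in\mathscr{B}(\overline{\Gamma},E)\}$ is an effective Siegel/$S$-unit--type statement, not a formal consequence of ``two distinct roots''), or pairs of values lying simultaneously near $\overline{\Gamma}$, to which non-special curve results such as Lemma~\ref{lem3.1} apply. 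Your peeling step produces only the single condition $(g\circ\phi_{w_{n}})(Q)\in\mathscr{B}(\overline{\Gamma},E)$ at the predecessor $Q$ --- the predecessor of a hit need not itself be a hit --- so even after discarding the false input you would still need either a correctly quoted fixed-field bounded-height result or a mechanism producing two simultaneous conditions; neither is supplied. As it stands the proposal does not prove the proposition.
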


\section{Some preliminar results} \label{sec3}

We define the height of $\bold{x}= (x,y) \in \mathbb{G}_m^2$ by $h(\bold{x})= h(x) + h(y)$.

For $F \in \overline{\mathbb{Q}}[X,Y]$  an absolutely irreducible polynomial of degree $d$ and height $h$, which is not special, 
we use the notation $\Delta = \deg_X F + \deg_Y F$.

For $\Gamma$ a finitely generated subgroup of $\mathbb{G}^2_m$ of rank $r >0$, we take $K$ to be the smallest number field containing all coefficients of $F$ and the group $\Gamma$, so that
\begin{center}
 $F \in K[X,Y]$ and $\Gamma \subset (K^*)^2$.
\end{center}
Letting $\mathcal{C} \subset \overline{\mathbb{Q}}^2$ be the zero set of the above polynomial, we state the following technical result.
\begin{lemma}\label{lem3.1}\cite[Lemma 4.5]{OS}
 Let $K, \Gamma, \mathcal{C}, \Delta$ and $h$ as above with $\Delta \geq 2$. Then there is a constant $c_0(K, \Gamma)$ depending only on $K$ and the generators of $\Gamma$, such that for $\zeta$ defined by
 \begin{center}
  $\zeta^{-1}= c_0(K,\Gamma)\exp (2\Delta^2)\Delta^{7r+22}(\Delta +h)(\log \Delta)^6$,
 \end{center} where $r$ is the rank of $\Gamma$, we have that
 \begin{center}
  $\# \left( \mathcal{C} \cap \mathscr{C}_2(\overline{\Gamma}, \zeta) \right) \leq \exp \left( (h+1) \exp \left( (2 + o(1)) \Delta^2 \right)   \right)$.
 \end{center}
\end{lemma}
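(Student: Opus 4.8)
The statement is quoted verbatim as \cite[Lemma 4.5]{OS}, so the plan is to recall the structure of that argument rather than to look for a new one. The starting observation is that the hypothesis ``$F$ absolutely irreducible and non-special'' says precisely that $\mathcal{C}$ is not, and hence contains no, coset $\gamma H$ of a positive-dimensional subtorus $H\subset\mathbb{G}_m^2$: such cosets are exactly the zero sets of monomial equations $aX^mY^n=b$ or $aX^m=bY^n$. This is what puts us in position to invoke the effective forms of the Bombieri--Zannier theorem and of the toric Bogomolov property for curves in $\mathbb{G}_m^2$; the idea is then to feed the numerology of $\Delta$, $h$ and $r$ into those estimates and read off the stated double-exponential bound.

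Concretely, I would split $\mathcal{C}\cap\mathscr{C}_2(\overline{\Gamma},\zeta)$ according to the size of the ``$\Gamma$-part'' of a point. Given $x=yz$ with $y\in\overline{\Gamma}$ and $h(z)\le\zeta(1+h(y))$, fix a threshold $t_0$ of polynomial size in $\Delta$ and $h$. In the range $h(y)\le t_0$ one gets $h(z)\le\zeta(1+t_0)$, hence $h(x)\le t_0+\zeta(1+t_0)$ is under control, so $x$ lies on a slice of $\mathcal{C}$ of bounded height inside a thin neighbourhood of $\overline{\Gamma}$; an effective small-points / Bogomolov estimate for the non-special curve $\mathcal{C}$ then caps the number of such $x$ by a quantity of the shape $\exp\bigl((h+1)\exp((2+o(1))\Delta^2)\bigr)$, and this is the dominant contribution --- the factor $h+1$ entering linearly from the height of $F$, and the $\exp((2+o(1))\Delta^2)$ being the standard degree dependence for this problem. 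In the complementary range $h(y)>t_0$ one exploits that $\zeta$ has been taken so small --- its reciprocal being exactly $c_0(K,\Gamma)\exp(2\Delta^2)\Delta^{7r+22}(\Delta+h)(\log\Delta)^6$ --- that the quantitative Bombieri--Zannier machinery applies: a Vojta-type height inequality forces any two such points to be multiplicatively very close on $\mathcal{C}$, a Mumford-style gap principle then spaces the admissible values of $h(y)$ out geometrically, and a descent (together with non-speciality) bounds the number of points in this range by a quantity absorbed into the same bound. Summing the two contributions gives the asserted estimate.

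The step I expect to be the genuine obstacle --- and the reason one simply cites \cite{OS} rather than reproducing the proof --- is the explicit constant bookkeeping: producing precisely the displayed $\zeta$, in particular the exponent $7r+22$ and the factor $(\log\Delta)^6$ (calibrated to the Vojta/gap step and to the rank dependence), and producing precisely the shape $\exp\bigl((h+1)\exp((2+o(1))\Delta^2)\bigr)$, requires threading the effective Bogomolov input, the constants in the Vojta inequality, and the dependence on $r$ through the descent without any loss. Each individual ingredient is standard in this circle of ideas, but assembling them to hit these exact expressions is what \cite[Lemma 4.5]{OS} carries out, and I would follow that.
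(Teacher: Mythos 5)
This lemma is not proved in the paper at all: it is quoted verbatim from \cite[Lemma 4.5]{OS}, so your proposal, which in the end simply defers to that reference, matches the paper's treatment of the statement. One caveat for accuracy: your sketch of the internal mechanism (effective Bogomolov input, Vojta-type inequality, Mumford gap, descent) is not how \cite{OS} actually obtain this bound --- their Lemma 4.5 rests on effective height bounds, in the spirit of B\'erczes--Evertse--Gy\H{o}ry--Pontreau and ultimately Baker's method, for the points of a non-special curve lying in $\mathscr{C}_2(\overline{\Gamma},\zeta)$ (the displayed $\zeta$, with its $\Delta^{7r+22}$ and $(\log\Delta)^6$ factors, is exactly the threshold coming from that height estimate), after which the doubly exponential count $\exp\left((h+1)\exp\left((2+o(1))\Delta^2\right)\right)$ arises from the elementary step of counting all algebraic numbers of bounded degree and bounded height, rather than from a gap-principle/descent argument.
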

In other side and more generally, if  $K$ is an algebraically closed field of characteristic zero, we consider polynomials $F \in K[X]$ that are not monomials.
If one denotes \begin{center}$A(n,r)= (8n)^{4n^4(n+r+1)}$, \end{center} we quote the following counting result.
\begin{lemma}\label{lem3.2}\cite[Lemma 4.7]{OS}
 Let $F \in K[X]$ be a polynomial of degree $D$ which is not a monomial and let $\Gamma \subset K^*$ be a multiplicative subgroup of rank $r$. Then
 \begin{center}
  $\# \{ (u,v) \in \Gamma^2 | F(u)=v \} < D. A(D+1, r) + D. 2^{D+1}$.
 \end{center}

\end{lemma}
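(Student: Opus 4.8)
\emph{Proof proposal.} The plan is to rewrite $F(u)=v$ as a linear (``unit'') equation with variables in $\Gamma$, to isolate the degenerate solutions and bound them by an elementary root count, and to bound the non-degenerate ones by a quantitative subspace-theorem estimate for unit equations.

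Write $F(X)=\sum_{i\in I}a_iX^i$ with $I\subseteq\{0,1,\dots,D\}$ and $a_i\neq 0$ for $i\in I$; since $F$ is not a monomial, $2\le\#I\le D+1$. If $(u,v)\in\Gamma^2$ satisfies $F(u)=v$, then dividing by $v$ gives
\begin{equation*}
 \sum_{i\in I}a_iz_i=1,\qquad z_i:=u^i/v\in\Gamma .
\end{equation*}
A solution tuple $(z_i)_{i\in I}$ determines $u$ only through $u^{\,i-j}=z_i/z_j$ for one fixed pair $i\neq j$ in $I$ (such a pair exists because $\#I\ge 2$; for a monomial $\#I=1$, no such ratio is available, $u$ stays free, and there are then indeed infinitely many solutions, which is why the hypothesis is needed), hence up to at most $|i-j|\le D$ values, after which $v=F(u)$ is forced. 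So it is enough to bound the number of solutions $(z_i)_{i\in I}$ of the displayed equation and to multiply by $D$.

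Call such a tuple \emph{degenerate} if $\sum_{i\in B}a_iz_i=0$ for some nonempty proper $B\subsetneq I$; then $\sum_{i\in B}a_iu^i=0$, so $u$ lies in the root set of a nonzero polynomial of degree at most $D$. Ranging over the fewer than $2^{D+1}$ choices of $B$ and the at most $D$ roots for each (with $v=F(u)$ forced), the degenerate solutions contribute at most $D\cdot 2^{D+1}$ pairs $(u,v)$. For the non-degenerate tuples one applies a quantitative bound on the number of non-degenerate solutions of a linear equation $\sum_{i=1}^{n}a_ix_i=1$ with the $x_i$ in a multiplicative group of rank $r$ — the Evertse--Schlickewei--Schmidt theorem in the sharp form of Amoroso and Viada, giving at most $(8n)^{4n^4(n+r+1)}$ solutions; with $n=\#I\le D+1$ this is at most $A(D+1,r)$, hence at most $D\cdot A(D+1,r)$ non-degenerate pairs. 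Adding the two contributions yields the claimed inequality.

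The only substantial ingredient is the quantitative unit-equation estimate: this is what makes the bound uniform, depending on $\Gamma$ only through its rank, and obtaining precisely $A(D+1,r)$ requires that theorem in exactly this normalisation (equation in $D+1$ variables, group of rank $r$). The remaining steps — the reduction to a unit equation, the recovery of $(u,v)$ from a solution tuple, and the degenerate root count — are elementary, so I anticipate no further difficulty.
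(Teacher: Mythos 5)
Your proposal follows exactly the route behind the cited result: the paper gives no proof of Lemma~\ref{lem3.2} (it is quoted from \cite[Lemma~4.7]{OS}), and the intended argument is precisely your reduction to the unit equation $\sum_{i\in I}a_iz_i=1$ with $z_i=u^i/v\in\Gamma$, the vanishing-subsum analysis for degenerate solutions (each forcing $u$ to be a root of a nonzero subsum polynomial, giving at most $D\cdot 2^{D+1}$ pairs), the Evertse--Schlickewei--Schmidt/Amoroso--Viada count for the non-degenerate tuples, and the recovery of $u$ from $z_i/z_j=u^{i-j}$ up to at most $D$ choices; your use of the non-monomial hypothesis is also the right one.

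The one step you should tighten is the invocation of the Amoroso--Viada bound. That theorem bounds the number of non-degenerate solutions $x=(x_1,\dots,x_n)$ of $a_1x_1+\dots+a_nx_n=1$ lying in a subgroup of $\mathbb{G}_m^n$ of rank $\rho$ by $(8n)^{4n^4(n+\rho+1)}$: the rank parameter is the rank of the subgroup of the $n$-dimensional torus containing the solution \emph{tuples}, not the rank of a group containing each coordinate separately. Here the tuples $(u^i/v)_{i\in I}$ lie in the image of $\Gamma\times\Gamma$ under the homomorphism $(u,v)\mapsto(u^iv^{-1})_{i\in I}$, a subgroup of rank at most $2r$ (and in general exactly $2r$ once $r\geq 1$), so the argument as you set it up honestly yields $D\cdot A(D+1,2r)+D\cdot 2^{D+1}$ rather than literally $D\cdot A(D+1,r)+D\cdot 2^{D+1}$. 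This is immaterial for every application in the paper (where $r$ is fixed and only the shape of the bound in $D$ matters), and the same imprecision may already sit in the quoted statement, but your phrasing ``the $x_i$ in a multiplicative group of rank $r$, giving at most $(8n)^{4n^4(n+r+1)}$ solutions'' is not the hypothesis of the theorem you are invoking, and it is exactly the point where the constant $r$ versus $2r$ is decided; you should either prove the lemma with $2r$ or explain how to descend to a rank-$r$ subgroup, which your current reduction does not do.
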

We also will make use of the combinatorial statement below, which has been used and proved in a number of works.
\begin{lemma}\cite[Lemma 4.8]{OS}
 Let $2 \leq T < N/2$. For any sequence 
 \begin{center}
  $0 \leq n_1 < ... < n_T \leq N$,
 \end{center}there exists $r \leq 2N/T$ such that $n_{i+1}- n_i=r$ for at least $T(T-1)/4N$ values of $i \in \{ 1,...,T-1 \}$.
\end{lemma}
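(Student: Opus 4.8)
The final statement to prove is a purely combinatorial lemma: given $2 \le T < N/2$ and a strictly increasing sequence $0 \le n_1 < \cdots < n_T \le N$, there exists a positive integer $r \le 2N/T$ such that the consecutive gap $n_{i+1}-n_i$ equals $r$ for at least $T(T-1)/4N$ indices $i$. The plan is to argue by pigeonhole on the $T-1$ consecutive differences $g_i := n_{i+1}-n_i$, $i=1,\dots,T-1$, each a positive integer.

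**Key steps.**

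First I would observe that the total of the gaps is bounded: $\sum_{i=1}^{T-1} g_i = n_T - n_1 \le N$. Hence the number of indices $i$ with $g_i > 2N/T$ is strictly less than $(T-1)/2$, since otherwise the sum would exceed $\tfrac{T-1}{2}\cdot\tfrac{2N}{T} = N(T-1)/T \ge N \cdot \tfrac12 \cdot \cdots$ — more cleanly, more than $N/(2N/T)\cdot(\text{something})$; precisely, at most $N/(2N/T) = T/2$ of the $g_i$ can exceed $2N/T$, so at least $(T-1) - T/2 \ge (T-2)/2 \ge (T-1)/4 \cdot \frac{2(T-2)}{T-1}$ of them satisfy $1 \le g_i \le 2N/T$. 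I would just record: at least $(T-1)/2$ of the indices $i$ have $g_i \le 2N/T$ (using $T \ge 2$ so that $T/2 \le T-1$, hence $T - 1 - T/2 = T/2 - 1$; one must be slightly careful here and may instead use $\ge (T-1)/2$ after a clean estimate, e.g. discarding the largest $\lfloor (T-1)/2\rfloor$ gaps). Call this set of ``small-gap'' indices $I$, with $|I| \ge (T-1)/2$.

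Second, each $g_i$ for $i \in I$ takes a value in $\{1, 2, \dots, \lfloor 2N/T \rfloor\}$, a set of at most $2N/T$ possible values. By pigeonhole, some value $r$ is attained by at least
\[
\frac{|I|}{2N/T} \;\ge\; \frac{(T-1)/2}{2N/T} \;=\; \frac{T(T-1)}{4N}
\]
of the indices $i \in I$. That $r$ is the desired common difference, and $r \le 2N/T$ by construction. This completes the argument.

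**Main obstacle.**

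There is no deep obstacle here; the only thing requiring care is the bookkeeping of the constants — making sure the ``at least $(T-1)/2$ small gaps'' claim is honestly justified from $\sum g_i \le N$ and $T \ge 2$ (one wants to avoid an off-by-one that degrades the final bound below $T(T-1)/4N$), and then that the pigeonhole division by $2N/T$ (rather than $\lfloor 2N/T\rfloor$, which is only smaller) gives exactly the stated count. Since the hypothesis $T < N/2$ guarantees $2N/T > 1$ so that small gaps genuinely exist, and $T \ge 2$ keeps $(T-1)/2 \ge 1/2$ meaningful, the estimates go through cleanly.
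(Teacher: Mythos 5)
Your pigeonhole argument is correct and is essentially the standard proof of this lemma: the paper itself gives no proof, quoting it directly from \cite[Lemma 4.8]{OS}, where the argument is exactly this one (bound the number of gaps exceeding $2N/T$ via $\sum_i (n_{i+1}-n_i)\le N$, then pigeonhole the remaining at least $(T-1)/2$ small gaps over at most $2N/T$ possible values). The only rough spot is the muddled intermediate estimate for the number of large gaps, but your final count of at least $(T-1)/2$ small gaps is justified (since the number of large gaps is strictly less than $T/2$ and counts are integers), and the division by $2N/T$ then yields the stated bound $T(T-1)/4N$ with $r\le 2N/T$.
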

The following result for more general fields is a direct application of the previous lemma.

\begin{prop}\label{prop3.4}
 Let $K$ be an arbitrary field, $x \in K$ and let $\mathcal{S} \subset K$ be an arbitrary subset of $K$. Suppose there exist a  real number $0< \tau < 1/2$, and also $\Phi$ a sequence of polynomials contained in $\mathcal{F}= \{ \phi_1,.., \phi_k \} \subset K[X]$ such that
 \begin{center}
  $T_{x,\Phi}(N, \mathcal{S})= \tau N \geq 2$.
 \end{center} Then there exists an integer $t \leq 2 \tau^{-1}$ such that
 \begin{center}
  $\# \{ (u,v) \in \mathcal{S}^2 | \exists \psi \in \mathcal{F}_t $ with $ \psi (u)=v  \} \geq \dfrac{\tau^2N}{8}$.
 \end{center}

\end{prop}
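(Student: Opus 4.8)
The plan is to mimic the argument of \cite[Lemma 4.8]{OS} but carry it out along the single orbit selected by the sequence $\Phi$. Set $T = T_{x,\Phi}(N,\mathcal{S}) = \tau N$ and list the iteration levels
\begin{center}
$0 \le n_1 < n_2 < \cdots < n_T \le N$
\end{center}
at which $\Phi^{(n_i)}(x) \in \mathcal{S}$. Since $2 \le T = \tau N$ and $\tau < 1/2$, we have $2 \le T < N/2$, so the combinatorial lemma \cite[Lemma 4.8]{OS} applies: there is an integer $r \le 2N/T = 2\tau^{-1}$ such that $n_{i+1} - n_i = r$ for at least $T(T-1)/(4N)$ indices $i$. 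Take $t := r \le 2\tau^{-1}$.

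For each such index $i$, write $u_i := \Phi^{(n_i)}(x)$ and $v_i := \Phi^{(n_{i+1})}(x)$; both lie in $\mathcal{S}$ by construction. Because $n_{i+1} = n_i + t$, the point $v_i$ is obtained from $u_i$ by applying the composition $\psi_i := \phi_{w_{n_i+t}} \circ \cdots \circ \phi_{w_{n_i+1}}$, where $w \in W$ is the index word underlying $\Phi$. This $\psi_i$ is a composition of exactly $t$ maps from $\mathcal{F}$, hence $\psi_i \in \mathcal{F}_t$, and it satisfies $\psi_i(u_i) = v_i$. Thus each qualifying index $i$ contributes a pair $(u_i, v_i) \in \mathcal{S}^2$ for which some $\psi \in \mathcal{F}_t$ sends $u_i$ to $v_i$.

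It remains to see that these pairs are distinct. The map $i \mapsto (u_i, v_i)$ is injective: the first coordinate already determines $i$, since along the orbit $\mathcal{O}_\Phi(x)$ the values at distinct levels could in principle repeat only if the orbit is eventually periodic, but even then the map $i \mapsto u_i$ can be taken injective after discarding at most the finitely many indices where collisions occur — more cleanly, if the orbit is infinite the $u_i$ are pairwise distinct, and if it is finite one simply notes that a repeated value of $u_i$ forces the same forward behaviour, so one may collapse to a single representative without affecting the lower bound up to the (already crude) constant. Hence the number of pairs $(u,v) \in \mathcal{S}^2$ with $\psi(u) = v$ for some $\psi \in \mathcal{F}_t$ is at least
\begin{center}
$\dfrac{T(T-1)}{4N} = \dfrac{\tau N(\tau N - 1)}{4N} \ge \dfrac{\tau^2 N}{8}$,
\end{center}
the last inequality using $\tau N = T \ge 2$, which gives $\tau N - 1 \ge (\tau N)/2$. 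This is the claimed bound.

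The only genuinely delicate point is the injectivity/distinctness bookkeeping when the orbit $\mathcal{O}_\Phi(x)$ is finite (so $x$ is $\Phi$-preperiodic), where a single element of $\mathcal{S}$ may arise as $\Phi^{(n_i)}(x)$ for several $i$; I expect this to be handled exactly as in \cite{OS} by a harmless pruning that loses at most a bounded factor already absorbed into the constant $8$, so I do not anticipate it causing real trouble.
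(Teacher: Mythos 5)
Your argument is the same as the paper's: list the hit times $n_1<\cdots<n_T$, apply the gap lemma (\cite[Lemma 4.8]{OS}) to get a common difference $t\le 2\tau^{-1}$ occurring at least $T(T-1)/4N\ge \tau^2N/8$ times, and observe that each such index produces a pair $(\Phi^{(n_i)}(x),\Phi^{(n_{i+1})}(x))\in\mathcal{S}^2$ linked by the length-$t$ composition $\psi_i\in\mathcal{F}_t$ read off from the word underlying $\Phi$. The paper's proof consists of exactly these steps and then asserts the cardinality bound; so up to that point you match it precisely.

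The one place you go beyond the paper is the distinctness discussion, and there your patch does not work as stated. Collapsing repeated pairs is not a ``bounded factor'' loss: if $x$ is $\Phi$-preperiodic the number of \emph{distinct} pairs can stay bounded while $\tau^2N/8\to\infty$. Concretely, take $K=\mathbb{Q}(\zeta_7)$, $\mathcal{F}=\{X^2\}$, $x=\zeta_7$, $\mathcal{S}=\{x\}$: then $\Phi^{(n)}(x)=x$ exactly when $3\mid n$, so $\tau=1/3$, yet $\mathcal{S}^2$ contains a single pair, so no pruning can recover the bound $\tau^2N/8$ for large $N$. Also, in the semigroup setting a repetition $\Phi^{(a)}(x)=\Phi^{(b)}(x)$ does not force the orbit to be finite (later maps may differ), so ``infinite orbit $\Rightarrow$ the $u_i$ are pairwise distinct'' is not automatic either, unlike the single-polynomial case in \cite{OS}. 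That said, this gap is inherited from the paper itself: its proof of Proposition~\ref{prop3.4} never addresses distinctness, and what the argument genuinely proves is a lower bound on the number of indices $i$ (pairs counted with multiplicity), equivalently the stated bound under an additional hypothesis excluding such repetitions (e.g.\ $x$ not $\Phi$-preperiodic). So your write-up is as complete as the paper's where the two coincide; the extra claims you add to close the distinctness issue are the part that is not correct.
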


\begin{proof}
 Letting $T:= T_{x,\Phi}(N, \mathcal{S})$, we consider all the values $1\leq n_1 <...< n_T \leq N$ such that $\Phi^{(n_i)}(x) \in \mathcal{S}, i=1,...,T-1$.
 
 From the previous lemma, there exists $t \leq 2 \tau^{-1}$ such that the number of $i=1,..., T-1$ with $n_{i+1}- n_i=t$ is at least
 \begin{center}
  $\dfrac{T(T-1)}{4N} = \dfrac{T^2}{4}\left( 1 - \dfrac{1}{T} \right) = \dfrac{\tau^2 N}{4} \left( 1 - \dfrac{1}{T} \right) \geq \dfrac{\tau^2N}{8}$.
 \end{center} Moreover, if $\mathcal{J}:= \{ 1 \leq j \leq T-1 | n_{j+1} - n_j = t \}$,  then for each $ j \in \mathcal{J}$,
 \begin{center}
  $\Phi^{(n_j)}(x) \in \mathcal{S}$ and $\Phi^{(n_{j+1})}(x)= \psi(\Phi^{(n_j)}(x)) \in \mathcal{S}$, where $\psi \in \mathcal{F}_t$.
 \end{center} and hence 
 \begin{center}
  $\# \{ (u,v) \in \mathcal{S}^2 | \psi (u)=v $ for some $\psi \in \mathcal{F}_t \} \geq \dfrac{\tau^2N}{8}$.
 \end{center}
\end{proof}

  \section{Orbits in sets} \label{sec4}
  \begin{definition}
   We say that an orbit $\mathcal{O}_{\mathcal{F}}(x)$ of an element $x \in K$ under a semigroup generated by a finite set $\mathcal{F}$ intersects a family of sets $\mathcal{S}=\{ \mathcal{S}_N \}_{N \in \mathbb{N}}$ \textit{with low frequency} if
   \begin{center}
    $\displaystyle\lim_{N \rightarrow \infty} \dfrac{\displaystyle\max_{\Phi \textit{ sequence of } \mathcal{F}} T_{x, \Phi}(N, \mathcal{S}_N)}{N}=0$
   \end{center} In the particular case that $\mathcal{S}$ is a set with $S:=\mathcal{S}_1=\mathcal{S}_2=...$ in the limit above, we say that $\mathcal{O}_{\mathcal{F}}(x)$ \textit{intersects the set $\mathcal{S}$ with low frequency}.

  \end{definition}

  Now we give a result for the frequency of intersection of orbits of semigroups of polynomials with the set $\mathscr{C}(\overline{\Gamma}, \epsilon)$ for a finitely generated subgroup $\Gamma \subset \mathbb{G}_m$.
\begin{theorem}\label{th4.2}
  Let $K$ be an number field and $\mathcal{F}= \{ \phi_1,..., \phi_k \} \subset K[X]$ a finite set of polynomials that are not monomials with  $\deg \phi_i= d_i \geq 2$, and $d = \max_i d_i$.
Suppose that, for  a finitely generated subgroup $\Gamma \subset K^*$ of rank $r$, $x \in \overline{\mathbb{Q}}$, and $\theta_N=(\log N)^{-2}(\log \log N)^{-7r/2 -12}$, we have that $\mathcal{O}_{\mathcal{F}}(x)$ intersects the  family of sets $\{ \mathscr{C}(\overline{\Gamma}, \theta_N)\}_N$ with low frequency.
Then
\begin{center}
 $\displaystyle\max_{\Phi \textit{ sequence of } \mathcal{F}} T_{x, \Phi}(N, \mathscr{C}(\overline{\Gamma}, \theta_N)) \leq \dfrac{(4 \log d  + o(1))N}{ (\log \log \log N)}$, as $N \rightarrow \infty$.
\end{center} 
\end{theorem}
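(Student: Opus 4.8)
The plan is to combine Proposition~\ref{prop3.4} with the counting bound of Lemma~\ref{lem3.2}, estimating the rank of the "division group" relative to the set $\mathscr{C}(\overline{\Gamma}, \theta_N)$ along the lines of the Ostafe--Shparlinski argument. Fix $N$ large and suppose, for contradiction, that
\[
\tau := \frac{T_{x,\Phi}(N, \mathscr{C}(\overline{\Gamma}, \theta_N))}{N}
\]
is at least $(4\log d + \varepsilon)/\log\log\log N$ for some fixed $\varepsilon>0$ and a suitable sequence $\Phi$. By the low-frequency hypothesis $\tau\to 0$, so for $N$ large we have $0<\tau<1/2$ and $\tau N\ge 2$, and Proposition~\ref{prop3.4} produces an integer $t\le 2\tau^{-1}$ and a polynomial $\psi\in\mathcal{F}_t$ with
\[
\#\{(u,v)\in \mathscr{C}(\overline{\Gamma},\theta_N)^2 \mid \psi(u)=v\} \ge \frac{\tau^2 N}{8}.
\]
Note $\deg\psi \le d^t$, and since the $\phi_i$ are not monomials, $\psi$ is not a monomial either (composition of a non-monomial with anything is non-monomial), so Lemma~\ref{lem3.2} applies to $\psi$.

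First I would convert a pair $(u,v)\in\mathscr{C}(\overline{\Gamma},\theta_N)^2$ with $\psi(u)=v$ into a point counted by Lemma~\ref{lem3.2}. Writing $u=y_1z_1$, $v=y_2z_2$ with $y_i\in\overline{\Gamma}$ and $h(z_i)\le\theta_N(1+h(y_i))$, one enlarges $\Gamma$ to a finitely generated group $\Gamma'$ containing the finitely many $z_i$ that actually occur — but since there can be up to $\tau^2N/8$ such pairs, this naive enlargement is too crude. Instead, following \cite{OS}, I would bound the heights: the points $u$ lie in the orbit, so by Proposition~\ref{prop2.4} and the standard height-composition estimate $h(\Phi^{(n)}(x))\le d^n(h(x)+1)\cdot O(1)$ for $n\le N$, every relevant $u$ satisfies $h(u)=O(d^N (h(x)+h(\mathcal{F})+1))$; hence $h(z_i)$ is controlled, and one works instead in $\mathscr{B}(\overline{\Gamma}, E_N)$ for $E_N = \theta_N(1+ \max h(u)) = O(\theta_N d^N(\cdots))$. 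After clearing denominators, each $z_i$ lies in a fixed number field of bounded degree, and the union of all such $z_i$ generates a group of rank $O(\log E_N) = O(N)$ by a Northcott-type count (elements of bounded height in a fixed number field, raised to make them algebraic integer units, generate a group whose rank is $O(\log H)$). So $\psi(u)=v$ forces a relation $F(u)=v$ over a group $\Gamma'$ of rank $r' = r + O(N)$, and $(u,v)$ (after adjusting by the $z$-part, which changes $\psi$ into a nearby non-monomial of the same degree) corresponds to a point in $\{(u',v')\in(\Gamma')^2 : \widetilde\psi(u')=v'\}$.

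Then I would apply Lemma~\ref{lem3.2}: the number of such points is at most
\[
D\cdot A(D+1, r') + D\cdot 2^{D+1}, \qquad D = \deg\psi \le d^t,
\]
where $A(n,r')=(8n)^{4n^4(n+r'+1)}$. Taking logarithms twice: $\log A(d^t+1, r') = 4(d^t+1)^4(d^t+r'+2)\log(8d^t+8) = O(d^{4t}\cdot N\cdot t)$ using $r'=O(N)$, so $\log\log(\text{count}) = O(t\log d) + O(\log d^{4t}) + O(\log N) + O(\log t) = O(t\log d + \log N)$ — wait, the $\log N$ term is the issue and must be absorbed. Combining with the lower bound $\tau^2 N/8$ from Proposition~\ref{prop3.4}:
\[
\log\!\left(\frac{\tau^2 N}{8}\right) \le \log\!\big(D\cdot A(D+1,r') + D\cdot 2^{D+1}\big),
\]
and since $t\le 2\tau^{-1}$ while $\tau \ge (4\log d+\varepsilon)/\log\log\log N$ gives $t = O(\log\log\log N / \log d)$, one gets $d^{4t} = (\log\log N)^{4\cdot(4\log d+\varepsilon)/(\log d)\cdot(1+o(1))}$, which is only polylogarithmic in $N$; thus the right side is $\exp((o(1)+c)\, d^{4t} N)$ for the dominant term, and taking $\log$: $\log(\tau^2 N/8) = (1+o(1))\log N$ on the left, while the right is $O(d^{4t} N t)$ in its inner exponent — I realize the cleaner route is to match the exponents in $N$ directly. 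The correct bookkeeping (as in \cite{OS}) is that both sides, after one $\log$, are comparable to $N$ times a slowly-varying factor, and the constraint becomes $1 \le C\cdot d^{4t}$ roughly, i.e. $t \ge (\log\log N \text{-scale constant})$; chasing the constant $4\log d$ through $A(D+1,r')$'s exponent $4D^4 \sim 4d^{4t}$ and setting this against the available $t\le 2\tau^{-1}$ forces $2\tau^{-1}\cdot 4\log d \ge (1-o(1))\log\log\log N$, i.e. $\tau \le (4\log d + o(1))/\log\log\log N$, contradicting our assumption.

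The main obstacle will be the rank-inflation step: controlling $r'$, the rank of the group generated by $\Gamma$ together with all the $z$-parts arising from membership in $\mathscr{C}(\overline{\Gamma},\theta_N)$. One must show $r' = O(N)$ (or even $o(\log\log N \cdot \log N)$, whatever the calculation tolerates) uniformly over the $\tau^2N/8$ pairs, using that each $z_i$ has height $\le \theta_N(1+h(y_i))$ with $h(y_i)$ bounded by the orbit-height estimate — the subtlety is that $\theta_N$ was chosen precisely (the exponent $-7r/2-12$ matching the exponent of $\log\Delta$ in Lemma~\ref{lem3.1}) so that $\theta_N d^N$ stays subexponential in a way that keeps the double-exponential bound from Lemma~\ref{lem3.2} under control after taking logs twice. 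Getting this interplay between $\theta_N$, the orbit height growth $d^N$, and the rank bound exactly right — so that the $\log d$ constant emerging is $4$ and not something larger — is the delicate part; everything else is the routine combinatorial bookkeeping already packaged in Propositions~\ref{prop2.4} and~\ref{prop3.4} and Lemma~\ref{lem3.2}.
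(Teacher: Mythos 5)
There is a genuine gap, and it is at the heart of your argument: you try to count the pairs $(u,v)\in\mathscr{C}(\overline{\Gamma},\theta_N)^2$ with $\psi(u)=v$ by absorbing the perturbation parts $z$ into an enlarged finitely generated group $\Gamma'$ and then invoking Lemma~\ref{lem3.2}. This reduction cannot work. First, $\overline{\Gamma}$ itself is already not finitely generated, and the $z$-parts in the definition of $\mathscr{C}(\overline{\Gamma},\theta_N)$ range over \emph{all} algebraic numbers of small height, not over elements of any fixed number field; your ``Northcott-type'' claim that such elements generate a group of rank $O(\log E_N)=O(N)$ is unsubstantiated and false in general (algebraic numbers of bounded height in a fixed degree number about $e^{cH}$ and can be multiplicatively independent, and here the degree is not even bounded). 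Second, even granting $r'=O(N)$, Lemma~\ref{lem3.2} with rank $r'=O(N)$ gives a bound of size roughly $\exp\bigl(c\,d^{5t}\,N\bigr)$, which is vastly larger than $N$ and therefore yields no contradiction against the lower bound $\tau^2N/8$; you notice this yourself (``the $\log N$ term is the issue'') and then the argument dissolves into the assertion that ``the correct bookkeeping forces $2\tau^{-1}\cdot 4\log d\ge(1-o(1))\log\log\log N$,'' which is not derived from any inequality you have established. Indeed, a Lemma~\ref{lem3.2}-type (doubly exponential in $t$) count could at best produce a bound with denominator $\log\log N$ (that is the mechanism of Theorem~\ref{th4.4}); the $\log\log\log N$ denominator and the constant $4\log d$ in Theorem~\ref{th4.2} come from a \emph{triply} exponential count, and that is exactly what the paper uses instead: Lemma~\ref{lem3.1} (the bound $\#(\mathcal{C}\cap\mathscr{C}_2(\overline{\Gamma},\zeta))\le\exp((h+1)\exp((2+o(1))\Delta^2))$ for curves against the $\zeta$-neighbourhood of the division group), applied to the curves $\psi(X)-Y=0$ with $\Delta\approx d^{t_\Phi}\le(\log\log N)^{1/2}$, after checking that the specific choice $\theta_N=(\log N)^{-2}(\log\log N)^{-7r/2-12}$ satisfies $\theta_N\le\zeta/2$, so that $\mathscr{C}(\overline{\Gamma},\theta_N)^2\subset\mathscr{C}_2(\overline{\Gamma}\times\overline{\Gamma},\zeta)$. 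No rank inflation occurs in the paper; the division group and the height perturbation are handled wholesale by that lemma, which is the ingredient your proposal is missing and which you cannot replace by Lemma~\ref{lem3.2}.

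A secondary inaccuracy: your parenthetical claim that a composition of a non-monomial with anything is non-monomial is false (for example $(X-c)^2\circ(X^2+c)=X^4$ with $c\ne0$), so even the applicability of a monomial-excluding counting lemma to $\psi\in\mathcal{F}_t$ would need a separate justification rather than this one-line remark.
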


\begin{proof}
 For each $\Phi$, we define $ \tau_{\Phi}$ by $\tau_\Phi =T_{x,\Phi}(N, \mathscr{C}(\overline{\Gamma}, \theta_N))/N$.

We can assume that 
\begin{equation}\label{eq4.1}
 \tau_{\Phi} \geq \dfrac{4 \log d }{\log \log \log N} \geq \dfrac{2}{N}
\end{equation} for some $\Phi$, for otherwise there is nothing to be proved.

For $N$ large enough, Proposition~\ref{prop3.4} shows that there exists 
\begin{center}
$t_{\Phi} \leq 2 \tau_{\Phi}^{-1} \leq \dfrac{\log \log \log N}{2 \log d }$
\end{center} 
such that
\begin{center}
 $\# \{ (u,v) \in \mathscr{C}(\overline{\Gamma}, \theta_N)^2 | \psi (u)=v $ for some $\psi \in \mathcal{F}_{t_{\Phi}} \} \geq \dfrac{\tau_{\Phi}^2N}{8}$.
\end{center}
For any $\psi \in \mathcal{F}_{t_{\Phi}}$, we denote by $\mathcal{C}_{\psi}$ the curve defined by the zero set of the polynomial $\psi(X) - Y=0$. Then
\begin{align*}
 \displaystyle\sum_{ \psi \in \mathcal{F}_{t_\Phi}} &\#(C_{\psi} \cap \mathscr{C}(\overline{\Gamma}, \theta_N)^2 ) \\&= \displaystyle\sum_{ \psi \in \mathcal{F}_{t_\Phi}} \# \{ (u,v) \in \mathscr{C}(\overline{\Gamma}, \theta_N)^2 | \psi (u)=v\} \\& = \# \{ (u,v) \in \mathscr{C}(\overline{\Gamma}, \theta_N)^2 | \psi (u)=v \text{ for some  }\psi \in \mathcal{F}_{t_\Phi} \} \\& \geq \dfrac{\tau_{\Phi}^2N}{8}.
 \end{align*}
The set $\{ (u,v) \in \mathscr{C}(\overline{\Gamma}, \theta_N)^2 | \psi (u)=v\}$ is the intersection of the curve $C_{\psi}$ with the set $\mathscr{C}(\overline{\Gamma}, \theta_N)^2$.

We will define a $\zeta_\Phi$ as in Lemma~\ref{lem3.1} with parameters $\Delta_\Phi=d^{t_{\Phi}} +1$ and $h=h(\mathcal{F}_{t_\Phi})$. By Proposition~\ref{prop2.4}, we have that
\begin{center}
$h \leq O(d^{t_\Phi}(h(\mathcal{F})+1))= O(\Delta_\Phi)$,
\end{center} where the referred constant does not depend on $\Phi$ satisfying (\ref{eq4.1}), but only on $\mathcal{F}$.

Moreover,
\begin{center}
 $\Delta_{t_\Phi}=d^{t_\Phi} +1 \leq (\log \log N)^{1/2} + 1$,
\end{center} and thus
\begin{align*}
 \zeta_{\Phi}^{-1}:&= \exp(2 \Delta_\Phi^2 + O(1))\Delta_\Phi^{7r + 23}(\log \Delta_\Phi)^6\\ &= O((\log N)^2 (\log \log N)^{\frac{7r + 23}{2}}(\log \log \log N)^6),
\end{align*} for $N$ sufficiently large, with the referred constant not depending on $\Phi$ satisfying (\ref{eq4.1}) again.

For our choice of $\theta_N$, we have that $\theta_N \leq \zeta_\Phi /2$ for any $N$ large enough, and so 
\begin{center}
 $\mathscr{C}(\overline{\Gamma}, \theta_N)^2 \subset \mathscr{C}_2(\overline{\Gamma} \times \overline{\Gamma}, \zeta_{\Phi})$.
\end{center} By the previous calculations, this implies that \begin{center}$\sum_{\psi \in \mathcal{F}_{t_\Phi}} \# ( \mathcal{C}_{\psi} \cap \mathscr{C}_2(\overline{\Gamma} \times \overline{\Gamma}, \zeta) \geq \dfrac{\tau_\Phi^2 N}{8}$.\end{center}

Using Lemma~\ref{lem3.1} to obtain upper bounds for the $\# ( \mathcal{C}_{\psi} \cap \mathscr{C}_2(\overline{\Gamma} \times \overline{\Gamma}, \zeta)$, knowing that $t_\Phi \leq 2 \tau_\Phi^{-1}$, we will have, as $\tau_\Phi \rightarrow 0, N \rightarrow \infty$, that
\begin{align*}
N &\leq 8\tau_{\Phi}^{-2} k^{t_\Phi} \exp (h\exp ((2 + o(1)\Delta_\Phi^2)) \\&
\leq8 \tau_{\Phi}^{-2}k^{t_\Phi} \exp (\exp( \exp ((2 \log d + o(1)t_\Phi))) \\&
\leq 8 \tau_{\Phi}^{-2}k^{t_\Phi} \exp (\exp( \exp ((4 \log d + o(1)\tau_\Phi^{-1}))) \\&
\leq  \{\exp (\exp( \exp ((4 \log d + o(1)\tau_\Phi^{-1})))\},\end{align*} and then
\begin{center}
 $\tau_\Phi \leq  \dfrac{(4 \log d  + o(1))}{\log \log \log N}$,
\end{center} and hence
\begin{center}
 $\tau_\Phi \leq \displaystyle\max_{\Phi} \tau_\Phi \leq  \dfrac{(4 \log d  + o(1))}{\log \log \log N}$
\end{center} as wanted when $N \rightarrow \infty$
.
\end{proof}
\begin{corollary}\label{cor4.3}
Under the conditions of Theorem~\ref{th4.2} we have that 
\begin{center}
$\# \{ y \in  \mathscr{C}(\overline{\Gamma}, \theta_N)  | y = f(x), f \in \mathcal{F}_n, n \leq N   \} \leq  k^N\dfrac{(4 \log d  + o(1))N}{ (\log \log \log N)} $ 
\end{center} as $N \rightarrow \infty $.
\end{corollary}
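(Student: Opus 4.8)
The plan is to derive Corollary~\ref{cor4.3} from Theorem~\ref{th4.2} by a union bound over the at most $k^N$ branches of the semigroup tree of depth $N$. The point is that Theorem~\ref{th4.2} already controls, for a \emph{single} sequence $\Phi$, the number of indices $n\leq N$ with $\Phi^{(n)}(x)\in\mathscr{C}(\overline{\Gamma},\theta_N)$; the orbit $\mathcal{O}_{\mathcal{F}}(x)$ restricted to depth $\leq N$ is covered by $k^N$ such sequences.

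First I would rewrite the set in question in terms of the sequences $\Phi=\Phi_w$, $w\in W$. Since every $f\in\mathcal{F}_n$ has the form $\Phi_w^{(n)}$ and $\Phi_w^{(n)}$ depends only on the coordinates $w_1,\dots,w_n$ of $w$, we have
\[
\{\,y\in\mathscr{C}(\overline{\Gamma},\theta_N)\mid y=f(x),\ f\in\mathcal{F}_n,\ n\leq N\,\}
=\bigcup_{w}\Bigl(\mathscr{C}(\overline{\Gamma},\theta_N)\cap\{\,\Phi_w^{(n)}(x)\mid 0\leq n\leq N\,\}\Bigr),
\]
where it is enough to let $w$ range over a system of representatives of the truncations $(w_1,\dots,w_N)\in J^N$, of which there are at most $k^N$.

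Next, for each fixed branch $w$, the set $\mathscr{C}(\overline{\Gamma},\theta_N)\cap\{\Phi_w^{(n)}(x)\mid 0\leq n\leq N\}$ has cardinality at most $T_{x,\Phi_w}(N,\mathscr{C}(\overline{\Gamma},\theta_N))+1$, hence at most $\max_{\Phi}T_{x,\Phi}(N,\mathscr{C}(\overline{\Gamma},\theta_N))+1$. Summing this over the at most $k^N$ representative branches, and discarding the overcounting coming from points lying on several branches, gives
\[
\#\{\,y\in\mathscr{C}(\overline{\Gamma},\theta_N)\mid y=f(x),\ f\in\mathcal{F}_n,\ n\leq N\,\}
\leq k^N\Bigl(\max_{\Phi}T_{x,\Phi}(N,\mathscr{C}(\overline{\Gamma},\theta_N))+1\Bigr).
\]
Applying Theorem~\ref{th4.2} to bound the right-hand side by $k^N\bigl((4\log d+o(1))N/\log\log\log N+1\bigr)$ and absorbing the additive term into the $o(1)$ yields the claimed estimate as $N\to\infty$.

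There is no serious obstacle here: the only point requiring a little care is the observation that truncating a branch $w$ at level $N$ leaves every iterate $\Phi_w^{(n)}(x)$ with $n\leq N$ unchanged, so the relevant branching factor is exactly $k^N$ and not larger; everything else is immediate from the definitions and from Theorem~\ref{th4.2}. (If one prefers, one can avoid even the additive $1$ by noting that the index $n=0$ contributes only the single point $x$, which is negligible.)
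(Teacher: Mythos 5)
Your proposal is correct and follows essentially the same route as the paper's own proof: cover the depth-$\leq N$ orbit by the $k^N$ branches of length $N$, bound each branch's contribution uniformly by Theorem~\ref{th4.2}, and sum. The only difference is your explicit handling of the $n=0$ term and the additive $+1$, which the paper silently ignores and which, as you note, is absorbed into the $o(1)$.
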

\begin{proof}
 Given $N$ very large, the set $\mathcal{F}_N$ contains $k^N$ polynomials. For each $f \in \mathcal{F}_N$, we can choose a sequence $\Phi$ of terms in $\mathcal{F}$ whose $\Phi^{(N)}= f$, obtaining $k^N$ sequences representing the elements of $\mathcal{F}_N$.
 For each sequence $\Phi$ chosen, when $N$ is large, \begin{center} $\# \{n \leq N | \Phi^{(N)}(x) \in \mathscr{C}(\overline{\Gamma}, \theta_N) \} \leq \dfrac{(4 \log d  + o(1))N}{ (\log \log \log N)}$ \end{center}
uniformly for any $\Phi$ by the previous theorem, or in other words, for each path in the $N$-tree $\mathcal{F}_N$. Since there are $k^N$ paths(polynomials, sequences) in the $n$-tree $\mathcal{F}_N$, this yields
\begin{center}
$\# \{ y \in  \mathscr{C}(\overline{\Gamma}, \theta_N)  | y = f(x), f \in \mathcal{F}_n, n \leq N   \} \leq  k^N\dfrac{(4 \log d  + o(1))N}{ (\log \log \log N)} $ 
\end{center} as $N \rightarrow \infty $.
 \end{proof} 
 \begin{theorem}\label{th4.4}
  Let $K$ be a field of charateristic zero and $\mathcal{F}= \{ \phi_1,..., \phi_k \} \subset K[X]$ a finite set of polynomials that are not monomials with  $\deg \phi_i= d_i \geq 2$ and $d = \max_i d_i$.
Then, for a finitely generated subgroup $\Gamma \subset K^*$ of rank $r, x \in K$ such that $\mathcal{O}_{\mathcal{F}}(x)$ intersects $\Gamma$ with low frequency, we have that
\begin{center}
 $\displaystyle\max_{\Phi \text{ sequence in } \mathcal{F}} T_{x, \Phi}(N, \Gamma) \leq \dfrac{(10 \log d  + o(1))N}{ (\log \log N)}$, as $N \rightarrow \infty$.
\end{center} 
\end{theorem}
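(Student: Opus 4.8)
The plan is to imitate the proof of Theorem~\ref{th4.2} almost line for line, but to estimate the curves produced by Proposition~\ref{prop3.4} by means of Lemma~\ref{lem3.2} instead of Lemma~\ref{lem3.1}. Since Lemma~\ref{lem3.2} counts the solutions of $\psi(u)=v$ in $\Gamma^{2}$ directly---with no passage through the thickened sets $\mathscr{C}_{2}$ and no auxiliary height parameter, so that Proposition~\ref{prop2.4} plays no role---it produces only a double exponential in the level $t_{\Phi}$ where Lemma~\ref{lem3.1} produced a triple one, and this is exactly what upgrades the denominator $\log\log\log N$ of Theorem~\ref{th4.2} to $\log\log N$ here. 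Concretely, for each sequence $\Phi$ in $\mathcal{F}$ set $\tau_{\Phi}=T_{x,\Phi}(N,\Gamma)/N$, fix a large $N$, and let $\Phi$ attain the maximum. We may assume
\[
 \tau_{\Phi}\ \ge\ \frac{10\log d}{\log\log N},
\]
since otherwise the desired bound already holds; for $N$ large this forces $\tau_{\Phi}N\ge 2$, and the low-frequency hypothesis gives $\tau_{\Phi}\to 0$, so $0<\tau_{\Phi}<1/2$ eventually and Proposition~\ref{prop3.4} applies with $\mathcal{S}=\Gamma$. It supplies an integer $t_{\Phi}\le 2\tau_{\Phi}^{-1}$ with
\[
 \#\{(u,v)\in\Gamma^{2}\mid \psi(u)=v\text{ for some }\psi\in\mathcal{F}_{t_{\Phi}}\}\ \ge\ \frac{\tau_{\Phi}^{2}N}{8}.
\]

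For the matching upper bound, each $\psi\in\mathcal{F}_{t_{\Phi}}$ has $\deg\psi\le d^{t_{\Phi}}$ and, being a composition of non-monomials of degree $\ge 2$, is not a monomial, so Lemma~\ref{lem3.2} gives $\#\{(u,v)\in\Gamma^{2}\mid\psi(u)=v\}<\deg\psi\cdot A(\deg\psi+1,r)+\deg\psi\cdot 2^{\deg\psi+1}$. The set in Proposition~\ref{prop3.4} is the union of these solution sets over the at most $k^{t_{\Phi}}$ elements of $\mathcal{F}_{t_{\Phi}}$, so
\[
 \frac{\tau_{\Phi}^{2}N}{8}\ \le\ k^{t_{\Phi}}\Bigl(d^{t_{\Phi}}A(d^{t_{\Phi}}+1,r)+d^{t_{\Phi}}2^{d^{t_{\Phi}}+1}\Bigr).
\]
Using $A(n,r)=(8n)^{4n^{4}(n+r+1)}$ with $r$ fixed, a routine computation gives, for every $t\ge 1$,
\[
 \log\Bigl(k^{t}\bigl(d^{t}A(d^{t}+1,r)+d^{t}2^{d^{t}+1}\bigr)\Bigr)\ =\ O\bigl(d^{5t}\,t\bigr),
\]
with implied constant depending only on $\mathcal{F}$ and $r$. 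Since $d^{5t}t$ increases with $t$ and $t_{\Phi}\le 2\tau_{\Phi}^{-1}$, combining the last two displays and solving for $N$ gives
\[
 \log N\ \le\ 2\log\tau_{\Phi}^{-1}+\exp\bigl((10\log d+o(1))\tau_{\Phi}^{-1}\bigr)+O(1)\ =\ \exp\bigl((10\log d+o(1))\tau_{\Phi}^{-1}\bigr),
\]
the $o(1)$ being as $\tau_{\Phi}^{-1}\to\infty$. A further logarithm yields $\log\log N\le(10\log d+o(1))\tau_{\Phi}^{-1}$, that is, $\tau_{\Phi}\le(10\log d+o(1))/\log\log N$; since nothing in the estimate depends on the chosen $\Phi$, taking the maximum over $\Phi$ completes the proof.

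The step I expect to cost the most care is the bookkeeping of the $o(1)$ together with its uniformity in $\Phi$: one has to check that the passage from the two-variable count through Lemma~\ref{lem3.2} to the single double exponential above, and the absorption of the harmless factors $k^{t_{\Phi}}$ and $\tau_{\Phi}^{-2}$, is performed with constants depending only on $\mathcal{F}$ and $r$ and on nothing else---neither on the maximising sequence $\Phi$ nor on $N$---because only then does the conclusion concern $\max_{\Phi}T_{x,\Phi}(N,\Gamma)$ rather than one fixed orbit. This is exactly where the low-frequency hypothesis enters: it guarantees $\tau_{\Phi}<1/2$ for all $\Phi$ once $N$ is large (so that Proposition~\ref{prop3.4} applies) and $\tau_{\Phi}^{-1}\to\infty$ (so that the asymptotics above are meaningful). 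A secondary point, implicit already in the proof of Theorem~\ref{th4.2}, is that one must make sure each $\psi\in\mathcal{F}_{t_{\Phi}}$ is genuinely not a monomial, so that Lemma~\ref{lem3.2}---and in particular the finiteness of $\{(u,v)\in\Gamma^{2}\mid\psi(u)=v\}$---is available; this is the purpose of the hypothesis that the $\phi_{i}$ are not monomials.
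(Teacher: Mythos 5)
Your argument is essentially the paper's own proof: the paper likewise defines $\tau_\Phi=T_{x,\Phi}(N,\Gamma)/N$, invokes Proposition~\ref{prop3.4} to get $t_\Phi\le 2\tau_\Phi^{-1}$ with $\tau_\Phi^2N/8\le\sum_{\psi\in\mathcal{F}_{t_\Phi}}\#\{(u,v)\in\Gamma^2\mid\psi(u)=v\}$, bounds this via Lemma~\ref{lem3.2} by $k^{2\tau_\Phi^{-1}}\bigl(d^{2\tau_\Phi^{-1}}A(d^{2\tau_\Phi^{-1}}+1,r)+d^{2\tau_\Phi^{-1}}2^{d^{2\tau_\Phi^{-1}}+1}\bigr)$, and solves $N\le\exp\bigl(\exp((10\log d+o(1))\tau_\Phi^{-1})\bigr)$ for $\tau_\Phi$, exactly as you do. The one caveat is your parenthetical claim that a composition of non-monomials of degree $\ge 2$ is never a monomial, which is false in general (e.g. $(X+1)^2\circ(X^2-1)=X^4$); however, the paper's proof tacitly makes the same assumption when applying Lemma~\ref{lem3.2} to $\psi\in\mathcal{F}_{t_\Phi}$, so this is an issue inherited from the paper rather than a divergence from its argument.
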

\begin{proof}
 As before, we again define $\tau_\Phi = T_{x, \Phi}(N, \Gamma)/N $ and assume $\tau_{\Phi} \geq 2/N$.
Again from Proposition~\ref{prop3.4}, for $N$ large, there exists $t_\Phi \leq 2 \tau_{\Phi}^{-1}$ such that
\begin{center}
 $\dfrac{\tau_\Phi^2 N}{8} \leq \displaystyle\sum_{ \psi \in \mathcal{F}_{t_\Phi}} \# \{ (u,v) \in \Gamma^2 | \psi (u)=v\}$,
\end{center} which by Lemma~\ref{lem3.2}, as $\deg \psi \leq d^{t_\Phi}$, is upper bounded by \begin{center} $k^{2\tau_\Phi^{-1}}\left( d^{2\tau_\Phi^{-1}}A(d^{2\tau_\Phi^{-1}} +1, r) + d^{2\tau_\Phi^{-1}} 2^{d^{2\tau_\Phi^{-1}}+1}   \right)$.
\end{center}Therefore
\begin{center}
 $\dfrac{\tau_\Phi^2 N}{8} \leq k^{2\tau_\Phi^{-1}} d^{2\tau_\Phi^{-1}}A(d^{2\tau_\Phi^{-1}} +1, r) + k^{2\tau_\Phi^{-1}}d^{2\tau_\Phi^{-1}} 2^{d^{2\tau_\Phi^{-1}}+1}$.
\end{center} When $N \rightarrow \infty$ ($\tau_{\Phi} \rightarrow 0$ uniformly on $\Phi$), we have \begin{center}$N \leq 8 \tau_{\Phi}^{-2} \left( k^{2\tau_\Phi^{-1}} d^{2\tau_\Phi^{-1}}A(d^{2\tau_\Phi^{-1}} +1, r) + k^{2\tau_\Phi^{-1}}d^{2\tau_\Phi^{-1}} 2^{d^{2\tau_\Phi^{-1}}+1}   \right)$
\end{center}
bounded by
\begin{center}
 $N \leq \exp \left( \exp((10 \log d  + o(1))\tau^{-1}_{\Phi})    \right)$,
\end{center} from where the result follows.
\end{proof}
And as in Corollary~\ref{cor4.3}, the following is proven in an analogous way, working for more general fields of charateristic zero.
\begin{corollary}\label{cor4.5}
Under the conditions of Theorem~\ref{th4.4} we have that 
\begin{center}
$\# \{ y \in  \Gamma  | y = f(x), f \in \mathcal{F}_n, n \leq N   \} \leq  k^N\dfrac{(10 \log d  + o(1))N }{ (\log \log N)}$, 
\end{center} as $N \rightarrow \infty $.
\end{corollary}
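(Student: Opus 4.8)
The plan is to mimic the argument of Corollary~\ref{cor4.3}, replacing the appeal to Theorem~\ref{th4.2} with an appeal to Theorem~\ref{th4.4}, and noting that the whole argument only uses that $K$ has characteristic zero (not that it is a number field). First I would observe that for $N$ large the level set $\mathcal{F}_N$ is a complete $k$-ary tree of depth $N$, hence consists of exactly $k^N$ polynomials; moreover every element $y$ of the orbit with $y = f(x)$ for some $f \in \mathcal{F}_n$, $n \leq N$, lies on some root-to-vertex path of this tree, and each such path is the truncation $\Phi^{(n)}$ of at least one infinite sequence $\Phi$ of maps drawn from $\mathcal{F}$. So I fix, once and for all, a choice of $k^N$ sequences $\Phi$ whose length-$N$ truncations exhaust $\mathcal{F}_N$; the length-$n$ truncations of these sequences for $n \leq N$ then exhaust $\bigcup_{n \leq N}\mathcal{F}_n$.

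Next I would bound the quantity in question by a sum over paths. For each of the $k^N$ chosen sequences $\Phi$, Theorem~\ref{th4.4} gives
\begin{center}
$\#\{\, n \leq N \mid \Phi^{(n)}(x) \in \Gamma \,\} = T_{x,\Phi}(N,\Gamma) \leq \displaystyle\max_{\Psi}T_{x,\Psi}(N,\Gamma) \leq \dfrac{(10\log d + o(1))N}{\log\log N}$
\end{center}
as $N \to \infty$, and crucially this $o(1)$ is uniform in $\Phi$ because the bound in Theorem~\ref{th4.4} is stated for the maximum over all sequences. Summing this estimate over the $k^N$ paths, and using that every $y \in \Gamma$ of the required form is counted by at least one pair $(\Phi, n)$ with $n \leq N$ and $\Phi^{(n)}(x) = y$, yields
\begin{center}
$\#\{\, y \in \Gamma \mid y = f(x),\ f \in \mathcal{F}_n,\ n \leq N \,\} \leq k^N \cdot \dfrac{(10\log d + o(1))N}{\log\log N}$,
\end{center}
which is exactly the claimed inequality.

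The one point deserving care — and the only real obstacle — is the bookkeeping behind the word ``exhaust'': the map from (chosen sequence $\Phi$, level $n \leq N$) to the orbit value $\Phi^{(n)}(x)$ is in general far from injective, both because distinct paths can give the same polynomial and because distinct polynomials can agree at the point $x$. This is fine for an upper bound — it only means the right-hand side overcounts — but I should phrase the argument as ``each admissible $y$ is hit by some path, hence the cardinality of the set of such $y$ is at most the total number of (path, level) hits'', rather than claiming equality of multisets. A secondary, very minor point is that Theorem~\ref{th4.4} already allows $K$ to be an arbitrary field of characteristic zero and only requires $\mathcal{O}_{\mathcal{F}}(x)$ to intersect $\Gamma$ with low frequency, so no extra hypotheses are needed here and the phrase ``working for more general fields of characteristic zero'' in the statement preceding the corollary is automatically justified. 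I would also remark that the factor $k^N$ is wasteful when many paths coincide, but since the corollary only asserts an upper bound, no sharpening is attempted.
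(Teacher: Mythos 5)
Your proposal is correct and follows essentially the same route as the paper: the paper proves Corollary~\ref{cor4.3} by choosing one sequence $\Phi$ per path in the depth-$N$ tree, applying the uniform-in-$\Phi$ bound from the relevant theorem to each of the $k^N$ paths, and summing, and it then states that Corollary~\ref{cor4.5} is proved analogously using Theorem~\ref{th4.4}. Your extra remarks on the non-injectivity of the (path, level) $\mapsto$ orbit-value map only make explicit the overcounting already implicit in the paper's argument.
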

\section{A graph theory result} \label{sec5}
Here we present a graph theory result of M\'{e}rai and Shparlinski \cite{MS} that will be used later in proofs.

Let $\mathcal{H}$ be a directed graph with possible multiple edges. Let $\mathcal{V}(\mathcal{H})$ be the set of vertices of $\mathcal{H}$. For $u,v \in \mathcal{V}(\mathcal{H})$, let $d(u,v)$ be the distance from $u$ to $v$, that is, the length of a shortest (directed) path from $u$ to $v$. Assume, that all the vertices have the out-degree $k\geq 1$, and the edges from all vertices are labeled by $\{1,...,k \}$.

For a word $\omega \in \{1,...,k \}^*$ over the alphabet $\{ 1,...,k\}$ and $ u \in \mathcal{V}(\mathcal{H})$, let $\omega(u) \in \mathcal{V}(\mathcal{H})$ be the end point of the walk started from $u$ and following the edges according to $\omega$.

Let us fix $u \in \mathcal{V}(\mathcal{H})$ and a subset $\mathcal{A} \subset \mathcal{V}(\mathcal{H})$. Then for words $\omega_1,..., \omega_l$ put
\begin{align*}
 L_N(u,\mathcal{A}; \omega_1,...,\omega_l)= \# \{v &\in \mathcal{V}(\mathcal{H}) : d(u,v) \leq N,\\ &d(u, \omega_i(v)) \leq N , \omega_i(v) \in \mathcal{A}, i=1,...,l  \}.
\end{align*}
To state the results, for $k,t \geq 1$, let $B(k,t)$ denote the size of the complete $k$-tree of depth $t-1$, that is
\begin{center} $  
B(k,t) = 
     \begin{cases}
       \ t
       \ &\quad\text{if} ~ k=1, \\
       \
       \
       \ \dfrac{k^t-1}{k-1}
       \ &\quad\text{otherwise} ~. \\

     \end{cases}
 $ \end{center}
 
 \begin{lemma}\label{lem5.1}
  Let $u \in \mathcal{V}(\mathcal{H})$, and $t,l \geq 1$ be fixed. If $\mathcal{A} \subset \mathcal{V}(\mathcal{H})$ is a subset of vertices with
  \begin{align*}
   \#\{ v \in \mathcal{A} : d&(u,v) \leq N \}\\ & \geq \max \left\{ 3B(k,t), \dfrac{3l}{t}\# \{v \in \mathcal{V}(\mathcal{H}) : d(u,v) \leq N \} \right\},
  \end{align*}
then there exist words $\omega_1,...,\omega_l \in \{ 1,...,k \}^*$ of length at most $t$ such that
\begin{center}
 $L_N(u,\mathcal{A}; \omega_1,...,\omega_l) \gg  \dfrac{t}{B(k,t)^{l+1}} \# \{v \in \mathcal{V}(\mathcal{H}) : d(u,v) \leq N \}$,
\end{center} where the implied constant depend only on $l$.

 \end{lemma}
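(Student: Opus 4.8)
The plan is to run an iterated pigeonhole over the boundedly many short words. Write $S_N=\{v\in\mathcal V(\mathcal H):d(u,v)\le N\}$, $n=\#S_N$, $A=\{v\in\mathcal A:d(u,v)\le N\}\subseteq S_N$ and $a=\#A$, so the hypothesis reads $a\ge\max\{3B(k,t),\,3ln/t\}$. The only structural input about $\mathcal H$ is that every vertex has out-degree $k$: from a fixed $v$ there are exactly $1+k+\dots+k^{t-1}=B(k,t)$ words of length $\le t-1$, hence at most $B(k,t)$ distinct vertices of the form $\omega(v)$ with $|\omega|\le t-1$; I also use the elementary bound $B(k,t)\ge t$, valid for all $k\ge1$. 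First note that the statement as literally phrased follows at once by taking $\omega_1=\dots=\omega_l$ equal to the empty word: then $\omega_i(v)=v$ and $L_N(u,\mathcal A;\omega_1,\dots,\omega_l)=a\ge(3l/t)n\ge t\,n/B(k,t)^{l+1}$, the last inequality because $B(k,t)^{l+1}\ge t^{l+1}\ge t^2$ and $3l\ge1$. What the subsequent sections genuinely need, however, is the refined form in which the $\omega_i$ are required to be nonempty (and, so as to produce honestly different curves in the later counting arguments, pairwise distinct); it is for that form that the two thresholds do real work, and that is what I would actually establish.

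For the refined form the engine is a one-step lemma. Given a set $V\subseteq S_N$, a set $F$ of fewer than $l$ ``forbidden'' words, and a guaranteed lower bound $m$ on the number of $v\in V$ from which some vertex of $A$ can be reached, while staying inside the ball, by a nonempty word of length $\le t$ not in $F$, one averages the quantities $N_\omega(V):=\#\{v\in V:\omega(v)\in A,\ d(u,\omega(v))\le N\}$ over the $O(B(k,t))$ admissible words $\omega\notin F$, and selects by pigeonhole a single word $\omega^\ast\notin F$ with $N_{\omega^\ast}(V)\gg m/B(k,t)$. I would then iterate this $l$ times, starting from $V=S_N$ and adjoining the chosen word to $F$ at each stage, producing pairwise distinct nonempty words $\omega_1,\dots,\omega_l$ of length at most $t$ and a set $V_l$ contained in the set counted by $L_N(u,\mathcal A;\omega_1,\dots,\omega_l)$. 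Tracking the reachability quantity $m$ through the iteration and feeding in both parts of the hypothesis --- $a\ge 3B(k,t)$ to keep $V_l$ from shrinking below a single vertex when $n$ is comparable to $B(k,t)$, and $a\ge 3ln/t$ together with $B(k,t)\ge t$ to keep $\#V_l$ at least a constant times $t\,n/B(k,t)^{l+1}$ --- yields the asserted bound, the $l+1$ powers of $B(k,t)$ recording the initial passage from $S_N$ into a set tied to $A$ together with the $l$ subsequent refinements, and the numerical factor $3$ being the slack that absorbs the removal of up to $l$ forbidden words.

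The hard part is sustaining the reachability quantity $m$ across the iteration: when the empty word is permitted it is trivial, since each $v\in V\cap A$ contributes through the empty word, but with only nonempty words this is lost. To get $m$ off the ground one uses that every $w\in A$ at distance $j\ge 1$ from $u$ has a parent at distance $j-1\le N$, so $A$ is reached from inside the ball by nonempty walks, and a pigeonhole over the at most $k$ possible final edges shows a positive fraction of the relevant starting vertices survives; to keep $m$ from collapsing over the remaining steps --- in particular, to ensure that most vertices of the current set still reach $A$ by some \emph{non-forbidden} short word, not merely by the one word used to produce that set --- one needs $A$ to be dense enough that a typical vertex has many short walks into $A$, which is exactly what the largeness of $a$ relative to \emph{both} $B(k,t)$ and $n$ guarantees. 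The residual work is bookkeeping: the side conditions $d(u,\omega_i(v))\le N$ pass through harmlessly, as they only shrink the sets being bounded below, and the implied constant depends on $l$ alone because $k$ and $\mathcal H$ enter only via $B(k,t)$.
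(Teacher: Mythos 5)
You should first be aware that the paper contains no proof of this lemma: it is quoted from M\'erai--Shparlinski \cite{MS}, so there is no in-paper argument to match, and your attempt has to stand on its own. Your opening observation is correct and worth making: as literally transcribed, with $\{1,\dots,k\}^*$ containing the empty word, the conclusion is trivial, since $B(k,t)\ge t$ and $l\ge 1$ give $\tfrac{3l}{t}\ge \tfrac{t}{B(k,t)^{l+1}}$, so taking every $\omega_i$ empty already works. The content only appears when the $\omega_i$ are required to be nonempty (this is what the later applications need, since the identity map is a monomial and Lemma~\ref{lem3.2} would not apply), and that is the version you set out to prove.

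For that version your argument has a genuine gap, located exactly where you flag ``the hard part''. The iterated pigeonhole needs that, after selecting $\omega_1$ and passing to $V_1=\{v:\ \omega_1(v)\in\mathcal A,\ d(u,\omega_1(v))\le N\}$, many vertices of $V_1$ still reach $\mathcal A$ inside the ball by a short word different from $\omega_1$. The hypothesis only supplies a \emph{global average}: counting along shortest paths and discarding the at most $B(k,t)$ elements of $\mathcal A$ at distance less than $t$ (this is where $a\ge 3B(k,t)$ enters), one gets at least $\tfrac23 at$ incidences $(v,\omega)$ with $\omega$ nonempty of length $\le t$ and $\omega(v)\in\mathcal A$, i.e.\ an \emph{average} vertex of the ball has $\ge 2l$ good words. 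Nothing transfers this average to the particular set $V_1$ your pigeonhole selects: $V_1$ could consist precisely of vertices whose only good word is $\omega_1$, and then your reachability quantity $m$ dies at the second step. The sentence asserting that the largeness of $a$ relative to both $B(k,t)$ and $n$ ``is exactly what guarantees'' many short walks into $\mathcal A$ from typical vertices of the current set is a claim, not a proof, and it is the whole difficulty. (A secondary slip: the number of admissible nonempty words of length $\le t$ is $k\,B(k,t)$, not $O(B(k,t))$; the extra $k^l$ can be absorbed at the end, but only via the arithmetic using $t\ge 3l$, which you never carry out.)

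A correct route avoids the iteration altogether and extracts the $l$ words in one shot, K\H{o}v\'ari--S\'os--Tur\'an style: set $f(v)=\#\{\omega \text{ nonempty},\ |\omega|\le t:\ \omega(v)\in\mathcal A,\ d(u,\omega(v))\le N\}$; the shortest-path count gives $\sum_v f(v)\ge \tfrac23 at$, and $a\ge \tfrac{3l}{t}n$ makes the mean of $f$ over the ball at least $2l$; convexity of $x\mapsto\binom{x}{l}$ then bounds $\sum_v\binom{f(v)}{l}$ below by $n\binom{2l}{l}$, and a single pigeonhole over the $\binom{kB(k,t)}{l}$ sets of $l$ distinct nonempty words of length $\le t$ yields $\omega_1,\dots,\omega_l$ good simultaneously for $\gg_l n/(kB(k,t))^l$ vertices; finally $t\ge 3l$ (forced by the hypothesis, since $a\le n$) gives $B(k,t)\gg_l t k^l$, which turns this into the stated bound $\gg_l \tfrac{t}{B(k,t)^{l+1}}\,n$ with an implied constant depending only on $l$. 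This also clarifies the true roles of the two thresholds, which differ from the roles you assigned them: $a\ge 3B(k,t)$ is used once, to discard the short-distance elements of $\mathcal A$ in the incidence count, and $a\ge \tfrac{3l}{t}n$ is used once, to make the convexity step nontrivial.
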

\section{more results of orbits in sets} \label{sec6}

\begin{theorem}
 Let $K$ be a field of charateristic zero and $\mathcal{F}= \{ \phi_1,..., \phi_k \}\newline  \subset K[X]$ a finite set of polynomials that are not monomials, with  $\deg \phi_i= d_i \geq 2$ and $d = \max_i d_i$. 
 Suppose that  $\Gamma \subset K^*$ is a finitely generated subgroup of rank $r$, and $ u \in K$.
 Let also $t,l \geq 1$ be  integers such that $t \geq 3l$ and
 $\# \{ v \in  \Gamma  | v = f(u), f \in \mathcal{F}_n, n \leq N   \} \geq 3B(k,t)$.
Then
\begin{align*}
 \# \{ v \in  \Gamma  | v = f(u), f \in \mathcal{F}_n, n \leq N   \} \ll_l \dfrac{B(k,t)^{l+1}}{t}(d^t A(d^t+1,r) +d^t2^{d^t +1}).
\end{align*}

\end{theorem}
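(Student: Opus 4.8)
The plan is to apply the graph-theory Lemma~\ref{lem5.1} to the natural directed graph attached to the semigroup $\mathcal{F}$, and then to translate the conclusion about walks in that graph into a bound involving the curve-point counts of Lemma~\ref{lem3.2}. First I would let $\mathcal{H}$ be the graph whose vertex set is the $\mathcal{F}$-orbit of $u$ together with the relevant intermediate iterates: a vertex is a value $\Phi_w^{(n)}(u)$, and from the vertex $y = \Phi_w^{(n)}(u)$ we draw, for each $i \in \{1,\dots,k\}$, an edge labelled $i$ to $\phi_i(y)$. By construction every vertex has out-degree $k$, the edges are labelled by $\{1,\dots,k\}$, and for a word $\omega = (i_1,\dots,i_m)$ we have $\omega(y) = \phi_{i_m}\circ\cdots\circ\phi_{i_1}(y)$, so $\omega(y)$ is obtained from $y$ by applying an element of $\mathcal{F}_m$. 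Taking the root to be $u$ itself, the condition $d(u,v)\le N$ picks out exactly the vertices $v = f(u)$ with $f \in \mathcal{F}_n$ for some $n \le N$. Set $\mathcal{A} = \Gamma$ (intersected with $\mathcal{V}(\mathcal{H})$), and write $M := \#\{v \in \Gamma : v = f(u),\ f\in\mathcal{F}_n,\ n\le N\} = \#\{v\in\mathcal{A}: d(u,v)\le N\}$, which is the quantity we wish to bound above.

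**Invoking the lemma.**

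The hypotheses $t \ge 3l$ and $M \ge 3B(k,t)$ are designed precisely so that, since trivially $M \le \#\{v\in\mathcal{V}(\mathcal{H}): d(u,v)\le N\}$ forces $\tfrac{3l}{t}\#\{v : d(u,v)\le N\} \le \#\{v : d(u,v)\le N\}$, one needs the first branch $3B(k,t)$ to dominate — but in fact the cleanest route is to note that we only need $M$ to exceed the max of the two quantities in Lemma~\ref{lem5.1}, and $t\ge 3l$ together with $M\ge 3B(k,t)$ handles the combinatorial branch while... here I must be a little careful: the lemma's hypothesis compares $\#\{v\in\mathcal{A}\}$ against $\tfrac{3l}{t}\#\{v\in\mathcal{V}(\mathcal{H})\}$, not against $M$ itself. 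So the intended reading is that we apply the lemma with $\mathcal{A}$ the set of $\Gamma$-points and use $t\ge 3l$ to absorb the second term; I would simply invoke Lemma~\ref{lem5.1} under the stated hypotheses (which is exactly how the theorem is phrased) to obtain words $\omega_1,\dots,\omega_l \in \{1,\dots,k\}^*$ of length at most $t$ with
\begin{equation*}
 L_N(u,\Gamma;\omega_1,\dots,\omega_l) \gg_l \frac{t}{B(k,t)^{l+1}}\,\#\{v\in\mathcal{V}(\mathcal{H}): d(u,v)\le N\} \ge \frac{t}{B(k,t)^{l+1}}\, M.
\end{equation*}

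**From walks to curve points.**

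Now I would unwind the definition of $L_N$. Each $v$ counted by $L_N(u,\Gamma;\omega_1,\dots,\omega_l)$ lies in $\mathcal{V}(\mathcal{H})$ with $d(u,v)\le N$ and satisfies $\omega_i(v)\in\Gamma$ for every $i$; in particular, fixing any single index, say $i=1$, and writing $\psi := \phi_{(\omega_1)_m}\circ\cdots\circ\phi_{(\omega_1)_1} \in \mathcal{F}_m$ with $m = |\omega_1| \le t$, we get a map $v \mapsto (v,\psi(v))$. But I actually want both $v$ and $\psi(v)$ in $\Gamma$; the cleanest way is to drop all but $\omega_1$ is not enough, so instead I observe that among the $v$ counted we also know $v$ need not be in $\Gamma$, only that $\omega_i(v)\in\Gamma$. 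Here I would take $l \ge 2$ (or re-examine: the theorem lets $l$ be any integer $\ge 1$ with $t\ge 3l$) and use two of the words, say $\omega_1,\omega_2$: actually the robust argument is to note $\omega_1(v)\in\Gamma$ and also, applying the word $\omega_1$ \emph{followed by nothing} versus using that $v$ lies on the orbit, one has $v$ equal to $\omega_1(v')$ for the appropriate ancestor — this is getting delicate, so let me state the intended mechanism plainly: for each counted $v$, the pair $(\omega_1(v),\omega_2(\omega_1^{-1}\cdots))$...

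The honest version of this last step, and the one I would write, is: for each $v$ counted by $L_N$, the element $\psi(v)$ with $\psi\in\mathcal{F}_{|\omega_1|}$ satisfies $\psi(v)\in\Gamma$, and we want a second $\Gamma$-coordinate. Either the hypothesis should be read as $\mathcal{A}$ being closed enough that $v\in\Gamma$ too (which happens when $u\in\Gamma$ and the $\omega_i$ are chosen so intermediate points stay in $\Gamma$ — but that is not given), or, more likely, the theorem intends to bound $M$ by first noting each counted $v$ gives rise, via $\omega_1$, to a point of the form $(v,\psi(v))$ on the curve $\mathcal{C}_\psi : \psi(X)=Y$, and separately $v$ itself is counted in $\Gamma$ because we may enlarge $\mathcal{A}$. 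Assuming the setup delivers pairs $(u',v')\in\Gamma^2$ on $\mathcal{C}_\psi$ — one such pair for each counted $v$, with at most $k^t$ possible $\psi$ ranging over $\bigcup_{m\le t}\mathcal{F}_m$ — I would conclude
\begin{equation*}
 \frac{t}{B(k,t)^{l+1}}\, M \;\ll_l\; L_N(u,\Gamma;\omega_1,\dots,\omega_l) \;\le\; \sum_{m\le t}\sum_{\psi\in\mathcal{F}_m} \#\{(u',v')\in\Gamma^2 : \psi(u')=v'\},
\end{equation*}
and then bound each inner count by Lemma~\ref{lem3.2} with $D = \deg\psi \le d^t$, giving $\#\{(u',v')\in\Gamma^2:\psi(u')=v'\} < d^t A(d^t+1,r) + d^t 2^{d^t+1}$ (using monotonicity of $A$ in its first argument and of $2^{(\cdot)}$). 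Since $\psi$ ranges over at most $B(k,t)$ — no, over at most $\sum_{m\le t}k^m \ll k^t$ — maps, but the $B(k,t)^{l+1}$ on the left already dominates bookkeeping of this kind, rearranging yields
\begin{equation*}
 M \;\ll_l\; \frac{B(k,t)^{l+1}}{t}\bigl(d^t A(d^t+1,r) + d^t 2^{d^t+1}\bigr),
\end{equation*}
which is the claim.

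**The main obstacle.**

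The hard part, and the step I would scrutinise most carefully, is the bridge between $L_N(u,\Gamma;\omega_1,\dots,\omega_l)$ and the curve-point sums: one must be sure that each vertex $v$ counted by $L_N$ really does supply a pair \emph{both of whose coordinates lie in $\Gamma$}, on a curve $\psi(X)=Y$ of degree at most $d^t$. The definition of $L_N$ only puts $\omega_i(v)$ in $\mathcal{A}=\Gamma$, not $v$; so one either needs to run the argument with the roles shifted (count $w = \omega_1(v)$ and recognise $v = \omega_1'(w)$ for an appropriate inverse-direction word — impossible in a general directed graph) or to choose $l\ge 2$ and exploit that $\omega_1(v)\in\Gamma$ and $\omega_2(v)\in\Gamma$ both hold, then relate $\omega_1(v)$ and $\omega_2(v)$ through a common ancestor $v$ via the composition that sends one to the other along the tree — which requires $\omega_1,\omega_2$ to be comparable as walks, not generally true. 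The clean resolution, which I expect is what the proof does, is to note that since $v$ itself has $d(u,v)\le N$, it is of the form $f(u)$ with $f\in\mathcal{F}_n$, $n\le N$, and the theorem's target set already ranges over all such $v$ regardless of membership in $\Gamma$ — so one instead sums over pairs $(v,\omega_1(v))$ with $\omega_1(v)\in\Gamma$ and bounds via a mild variant of Lemma~\ref{lem3.2} counting $\{(u',v')\in K\times\Gamma : \psi(u')=v'\}$, which is at most $D\cdot\#(\text{preimages})$ and still fits the stated bound after absorbing constants. Pinning down exactly which variant of the counting lemma is invoked, and confirming the $B(k,t)^{l+1}/t$ factor survives the passage, is where the real work lies.
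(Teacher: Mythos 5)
Your overall architecture (Lemma~\ref{lem5.1} applied to a functional graph of $\mathcal{F}$, then Lemma~\ref{lem3.2} on curves $\psi(X)=Y$ with $\deg\psi\le d^t$) is the right one, but the proposal has a genuine gap exactly at the step you yourself flag as ``the main obstacle,'' and none of your suggested repairs closes it. Because you build $\mathcal{H}$ on the orbit of $u$ and take $\mathcal{A}=\Gamma\cap\mathcal{V}(\mathcal{H})$, a vertex $v$ counted by $L_N(u,\mathcal{A};\omega_1,\dots,\omega_l)$ is only known to satisfy $\omega_i(v)\in\Gamma$, not $v\in\Gamma$, so you never produce points of $\Gamma^2$ on a curve and Lemma~\ref{lem3.2} does not apply. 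Your fallback, counting $\{(u',v')\in K\times\Gamma:\psi(u')=v'\}$, is not covered by Lemma~\ref{lem3.2} and admits no bound independent of $N$ (every $v'\in\Gamma$ has up to $\deg\psi$ preimages in $K$, and $\Gamma$ is infinite for $r\ge 1$), so the factor you would get grows with $N$ and the stated $N$-free bound cannot follow; the ``inverse word'' idea you also mention is, as you note, unavailable. In addition, with your choice of vertex set the hypothesis of Lemma~\ref{lem5.1} is not verifiable from the theorem's assumptions: the second branch of the max compares $\#\{v\in\mathcal{A}:d(u,v)\le N\}$ with $\tfrac{3l}{t}\#\{v\in\mathcal{V}(\mathcal{H}):d(u,v)\le N\}$, and when $\mathcal{V}(\mathcal{H})$ is the full orbit the right-hand side can vastly exceed the left, so ``$t\ge 3l$ absorbs the second term'' is not justified in your setup. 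Finally, your remark that the target set ``ranges over all such $v$ regardless of membership in $\Gamma$'' is incorrect: the quantity to be bounded is $\#\{v\in\Gamma: v=f(u),\ f\in\mathcal{F}_n,\ n\le N\}$.

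The paper's proof resolves both issues with one choice you did not make: it takes $\Gamma$ itself to play the role of \emph{both} $\mathcal{H}$ (vertex set, with edges $(x,\phi_i(x))$) and $\mathcal{A}$. Then $\mathcal{A}=\mathcal{V}(\mathcal{H})$, so the ratio branch of Lemma~\ref{lem5.1} is automatic from $t\ge 3l$ (this is precisely why the hypothesis $t\ge 3l$ appears), and every $v$ counted by $L_N$ lies in $\Gamma$ while $\omega_i(v)\in\Gamma$ as well, so $v\mapsto(v,\omega_i(v))$ injects the count into $\{(x,y)\in\Gamma^2: y=\omega_i(x)\}$ for a single composition $\omega_i$ of degree at most $d^t$; Lemma~\ref{lem3.2} then gives the bound $d^tA(d^t+1,r)+d^t2^{d^t+1}$ for each of the $l$ words, and combining with the lower bound $L_N\gg_l \tfrac{t}{B(k,t)^{l+1}}\#\{v\in\Gamma: d(u,v)\le N\}$ yields the theorem. (One may fairly object that the paper's graph is itself described loosely, since $\phi_i(x)$ need not lie in $\Gamma$ and $u$ need not be a vertex; but the decisive idea --- putting the subgroup, not the orbit, in the role of the vertex set so that both coordinates land in $\Gamma$ and the density hypothesis of Lemma~\ref{lem5.1} trivializes --- is exactly the ingredient missing from your write-up.)
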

\begin{proof}
 We consider the directed graph with the elements of $\Gamma$ as vertices, and edges $(x,\phi_i(x))$ for $i=1,...,k$ and $x \in \Gamma$. With the notation of Section~\ref{sec5} and Lemma~\ref{lem5.1}, we let $\Gamma$ take the place of $\mathcal{H}$ and $\mathcal{A}$.
 By hypothesis, $l \leq t/3$ and $\#\{ v \in \Gamma, d(u,v) \leq N \} \geq 3B(k,t)$. From Lemma~\ref{lem5.1}, there exist words $\omega_1,...,\omega_l \in \{ 1,...,k\}^*$ of length at most $t$, and therefore degree at most $d^t$, such that
 \begin{equation}\label{eq6.1}
  L_N(u, \Gamma; \omega_1,...,\omega_l) \gg_l  \dfrac{t}{B(k,t)^{l+1}} \# \{v \in \mathcal{V}(\Gamma) : d(u,v) \leq N \}.
 \end{equation} By Lemma~\ref{lem3.2}, we compute
  \begin{align*}
  L_N&(u, \Gamma; \omega_1,...,\omega_l)\\&=\# \{v \in \mathcal{V}(\Gamma) : d(u,v),d(u, \omega_i(v)) \leq N , \omega_i(v) \in \Gamma, i=1,...,l  \}\\
  &\leq \displaystyle\sum_{i \leq l} \# \{v \in \mathcal{V}(\Gamma) : d(u,v),d(u, \omega_i(v)) \leq N , \omega_i(v) \in \Gamma  \}\\
 & \leq \displaystyle\sum_{i \leq l} \# \{(x,y) \in \Gamma^2 : y= \omega_i(x) \} \\
 &\leq \displaystyle\sum_{i \leq l} (d^t A(d^t+1,r) +d^t2^{d^t +1})\\
 &=l(d^t A(d^t+1,r) +d^t2^{d^t +1}).
 \end{align*}Gathering this with (\ref{eq6.1}),  we conclude that
 \begin{center}
  $\# \{v \in \mathcal{V}(\Gamma) : d(u,v) \leq N \} \ll_l \dfrac{B(k,t)^{l+1}l}{t}(d^t A(d^t+1,r) +d^t2^{d^t +1})$,
 \end{center} as desired.
\end{proof}
\begin{corollary}\label{cor6.2}
 Let $K$ be a field of charateristic zero and $\mathcal{F}= \{ \phi_1,..., \phi_k \}\newline  \subset K[X]$ a finite set of polynomials that are not monomials, with  $\deg \phi_i= d_i \geq 2$ and $d = \max_i d_i$. 
 Suppose that  $\Gamma \subset K^*$ is a finitely generated subgroup of rank $r$,  $ u \in K$, and $\{t_N\}_N$ is a sequence of positive integers that goes to $\infty$ as $N \rightarrow \infty$ and that satisfies
 \begin{center}
  $\# \{ v \in  \Gamma  | v = f(u), f \in \mathcal{F}_n, n \leq N   \} \geq 3B(k,t_N)$
 \end{center} for each $N$.
 Then
\begin{center}
$\# \{ v \in  \Gamma  | v = f(u), f \in \mathcal{F}_n, n \leq N   \} \leq  \exp \left( \exp((10 \log d  + o(1))t_N)    \right)$, 
\end{center} as $N \rightarrow \infty $.
 \end{corollary}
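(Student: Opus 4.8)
The plan is to apply the preceding theorem with $t=t_N$ and a fixed value of $l$, the most economical choice being $l=1$. First I would check the hypotheses: since $t_N\to\infty$, we have $t_N\geq 3=3l$ for all large $N$, and the standing assumption of the corollary supplies the inequality $\#\{v\in\Gamma:v=f(u),\ f\in\mathcal F_n,\ n\leq N\}\geq 3B(k,t_N)$. The theorem then gives, with an absolute implied constant,
\begin{equation*}
\#\{v\in\Gamma:v=f(u),\ f\in\mathcal F_n,\ n\leq N\}\ \ll\ \frac{B(k,t_N)^{2}}{t_N}\bigl(d^{t_N}A(d^{t_N}+1,r)+d^{t_N}2^{d^{t_N}+1}\bigr),
\end{equation*}
so that everything that remains is to estimate this expression and take logarithms twice.

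For the estimate I would put $D=d^{t_N}$ (so $D\to\infty$) and handle the three factors separately. The combinatorial prefactor is harmless: $B(k,t_N)\leq\max\{t_N,\,k^{t_N}\}$, whence $\log\bigl(B(k,t_N)^2/t_N\bigr)=O(t_N)$. For the dominant factor, the definition $A(D+1,r)=(8(D+1))^{4(D+1)^4(D+r+2)}$ yields
\begin{equation*}
\log A(D+1,r)=4(D+1)^4(D+r+2)\log\bigl(8(D+1)\bigr)=(4+o(1))\,D^{5}\log D=(4+o(1))\,d^{5t_N}\,t_N\log d ,
\end{equation*}
while $\log\bigl(d^{t_N}2^{d^{t_N}+1}\bigr)=(1+o(1))\,D\log 2$ is of strictly smaller order; hence the $A$-term controls the growth and
\begin{equation*}
\log\#\{v\in\Gamma:v=f(u),\ f\in\mathcal F_n,\ n\leq N\}\ \leq\ (4+o(1))\,d^{5t_N}\,t_N\log d .
\end{equation*}

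It remains to take one further logarithm. Since $\log t_N=o(t_N)$,
\begin{equation*}
\log\log\#\{v\in\Gamma:v=f(u),\ f\in\mathcal F_n,\ n\leq N\}\ \leq\ 5t_N\log d+O(\log t_N)=(5\log d+o(1))\,t_N ,
\end{equation*}
and exponentiating twice — using $5\log d+o(1)\leq 10\log d+o(1)$ — gives the asserted bound $\exp\bigl(\exp((10\log d+o(1))t_N)\bigr)$. The only step that needs genuine care is deciding which of the two summands in the theorem's bound governs the size; it is $A(d^{t_N}+1,r)$, and this is immediate from $d^{t_N}\to\infty$. Everything else — in particular that the factor $B(k,t_N)$ is washed out after two logarithms — is routine. (The argument in fact delivers the sharper constant $5\log d$ in the inner exponential.)
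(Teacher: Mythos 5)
Your proof is correct and follows essentially the same route as the paper: apply the preceding theorem with $t=t_N$ and a fixed admissible $l$ (the paper keeps $l$ arbitrary, you take $l=1$), then estimate the resulting bound asymptotically using the explicit form of $A(d^{t_N}+1,r)$. Your write-up is in fact more detailed than the paper's, which simply asserts that the bound equals $\exp\left(\exp((10\log d+o(1))t_N)\right)$; your computation correctly identifies the dominant term and shows the inner constant can even be taken as $5\log d$, of which the stated $10\log d$ bound is a weaker consequence.
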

 \begin{proof}
  In the proof of the previous result, we can choose $l \geq 1$  an arbitrary integer and $N$ big enough so that $t_N \geq 3l$. For each of these $t_N$'s, we can apply the previous theorem, obtaining that
  \begin{align*}
 \# \{ v \in  \Gamma  | v = f(u), f \in \mathcal{F}_n, n \leq N   \} \ll_l \dfrac{B(k,t_N)^{l+1}}{t_N}(d^{t_N} A(d^{t_N}+1,r) +d^{t_N}2^{d^{t_N} +1}).
\end{align*} Moreover, since $t_N \rightarrow \infty$ as $N \rightarrow \infty$, it yields \begin{align*}B(k,t_N)^{l+1}(d^{t_N} A(d^{t_N}+1,r) +d^{t_N}2^{d^{t_N} +1}) =\exp \left( \exp((10 \log d  + o(1))t_N)    \right),\end{align*}
from where the result follows.
 \end{proof}
 \begin{remark}
 If the hypothesis of Corollary~\ref{cor6.2} are satisfied with \begin{center}$t_N \sim\dfrac{1}{10 \log d + o(1)} \log \log \left(  \dfrac{(10 \log d + o(1)) N)}{\log \log N} \right)$, as $ N \rightarrow \infty$, \end{center}
 then we recover and generalize Corollary~\ref{cor4.5}, as well as Theorem 3.1 of \cite{OS}, under our referred conditions.
\end{remark} 
\begin{theorem}\label{th6.4}
 Let $K$ be an algebraic number field and $\mathcal{F}= \{ \phi_1,..., \phi_k \}\newline  \subset K[X]$ a finite set of polynomials that are not monomials, with  $\deg \phi_i= d_i \geq 2$ and $d = \max_i d_i$. 
 Suppose that  $\Gamma \subset K^*$ is a finitely generated subgroup of rank $r$,  $ u \in K$, and $\{t_N\}_N$ is a sequence of positive integers that goes to $\infty$ as $N \rightarrow \infty$ and that satisfies
 \begin{center}
  $\# \{ v \in  \mathscr{C}(\overline{\Gamma}, \theta_N) | v = f(u), f \in \mathcal{F}_n, n \leq N   \} \geq 3B(k,t_N)$
 \end{center} for each $N$, whith $\theta_N \leq \left( \exp(d^{-2t_N})d^{N(-7r-24)}\right)$.
 Then
\begin{align*}
\# \{ v \in  \mathscr{C}(\overline{\Gamma}, \theta_N)  | v = f(u), f \in \mathcal{F}_n, n \leq N   \} \leq  \exp (\exp( \exp ((4 \log d + o(1))t_N))), 
\end{align*} as $N \rightarrow \infty $.
\end{theorem}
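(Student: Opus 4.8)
The plan is to replay the proof of the theorem preceding Corollary~\ref{cor6.2} almost verbatim, but to feed the words produced by Lemma~\ref{lem5.1} into the curve bound of Lemma~\ref{lem3.1} rather than into the exact count of Lemma~\ref{lem3.2}; this is the same substitution that upgrades Theorem~\ref{th4.4} to Theorem~\ref{th4.2}. Write $M_N:=\#\{\,v\in\mathscr{C}(\overline{\Gamma},\theta_N):v=f(u),\ f\in\mathcal{F}_n,\ n\le N\,\}$. Fix an arbitrary integer $l\ge 1$ (say $l=1$); since $t_N\to\infty$, we have $t_N\ge 3l$ for all large $N$. As in that theorem, form the directed graph $\mathcal{H}$ on the set $\mathscr{C}(\overline{\Gamma},\theta_N)$ with an edge labelled $i$ from $x$ to $\phi_i(x)$ for each $i=1,\dots,k$, take $\mathcal{A}=\mathcal{V}(\mathcal{H})$ and base vertex $u$, so that $\#\{v\in\mathcal{A}:d(u,v)\le N\}=M_N$. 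With $\mathcal{A}=\mathcal{V}(\mathcal{H})$ the second hypothesis of Lemma~\ref{lem5.1} is just $t_N\ge 3l$ and the first is $M_N\ge 3B(k,t_N)$, which holds by assumption; Lemma~\ref{lem5.1} therefore supplies words $\omega_1,\dots,\omega_l\in\{1,\dots,k\}^*$ of length at most $t_N$ with
\[
 L_N(u,\mathcal{A};\omega_1,\dots,\omega_l)\ \gg_l\ \frac{t_N}{B(k,t_N)^{l+1}}\,M_N .
\]

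Next I would bound $L_N$ from above. Each $\omega_i$ is realised by a composition $\psi_i\in\mathcal{F}_{|\omega_i|}$ with $\deg\psi_i\le d^{t_N}$, and, being a composition of the non-monomials $\phi_j$, it is again not a monomial, so the plane curve $\mathcal{C}_i$ cut out by $F_i:=\psi_i(X)-Y$ is absolutely irreducible and non-special, with $\Delta_i:=\deg_X F_i+\deg_Y F_i=\deg\psi_i+1$, hence $2\le\Delta_i\le d^{t_N}+1$, while $h(F_i)=O\big(d^{t_N}(h(\mathcal{F})+1)\big)$ by Proposition~\ref{prop2.4} --- all uniformly in $i$. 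The role of the hypothesis $\theta_N\le\exp(d^{-2t_N})\,d^{N(-7r-24)}$ is exactly the one the choice of $\theta_N$ plays in Theorem~\ref{th4.2}: it forces $\theta_N\le\zeta_i/2$ for every $i$ and all large $N$, where $\zeta_i$ is the constant attached by Lemma~\ref{lem3.1} to the curve $\mathcal{C}_i$ and to the finitely generated group $\Gamma\times\Gamma$ (uniformly in $i$, as $\zeta_i^{-1}$ depends on $\omega_i$ only through $\Delta_i$ and $h(F_i)$, both $O(d^{t_N})$). Hence $\mathscr{C}(\overline{\Gamma},\theta_N)^2\subset\mathscr{C}_2(\overline{\Gamma}\times\overline{\Gamma},\zeta_i)$; since every pair $(x,\omega_i(x))$ counted by $L_N$ lies on $\mathcal{C}_i$ and in $\mathscr{C}(\overline{\Gamma},\theta_N)^2$, Lemma~\ref{lem3.1} gives, with $\Delta:=d^{t_N}+1$,
\begin{align*}
 L_N(u,\mathcal{A};\omega_1,\dots,\omega_l)
 &\le\sum_{i\le l}\#\big(\mathcal{C}_i\cap\mathscr{C}_2(\overline{\Gamma}\times\overline{\Gamma},\zeta_i)\big)\\
 &\le l\,\exp\Big(O\big(d^{t_N}\big)\,\exp\big((2+o(1))\Delta^2\big)\Big).
\end{align*}

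Finally comes the asymptotic bookkeeping. Since $\Delta^2=d^{2t_N}(1+o(1))$ one has $(2+o(1))\Delta^2=\exp((2\log d+o(1))t_N)$, and as the factor $O(d^{t_N})=\exp(O(t_N))$ is negligible against a double exponential, the last display collapses to $\exp\big(\exp(\exp((2\log d+o(1))t_N))\big)$. Combining with the lower bound for $L_N$ and absorbing the harmless factor $l\,B(k,t_N)^{l+1}/t_N=\exp(O_l(t_N))$, one gets, for all large $N$,
\[
 M_N\ \ll_l\ \frac{B(k,t_N)^{l+1}}{t_N}\exp\big(\exp(\exp((2\log d+o(1))t_N))\big)\ \le\ \exp\big(\exp(\exp((4\log d+o(1))t_N))\big),
\]
the final inequality being immediate since $2\log d<4\log d$; this is the assertion.

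All of the genuine content sits in Lemmas~\ref{lem3.1} and~\ref{lem5.1}, so the main obstacle is the uniform bookkeeping. One must check that the single bound $\theta_N\le\exp(d^{-2t_N})d^{N(-7r-24)}$ dominates $\zeta_i/2$ simultaneously for \emph{every} word $\omega_i$ handed out by Lemma~\ref{lem5.1} and for all sufficiently large $N$, keeping track that $c_0(K,\Gamma)$, the $O(1)$ of Proposition~\ref{prop2.4}, and all the $o(1)$'s depend only on $\mathcal{F},d,k,r$ (not on $N$ or the words); and, in collapsing the tower of exponentials, that the successive $\exp(O(t_N))$ and $\exp(O_l(t_N))$ factors really can be swallowed into the $o(1)$ that appears in the innermost exponent of the final triple exponential.
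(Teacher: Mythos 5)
Your proposal is essentially the paper's own proof: the same labelled digraph on $\mathscr{C}(\overline{\Gamma},\theta_N)$ with $\mathcal{A}=\mathcal{V}(\mathcal{H})$ and base point $u$, the same lower bound on $L_N$ from Lemma~\ref{lem5.1}, the same upper bound obtained by noting $\mathscr{C}(\overline{\Gamma},\theta_N)^2\subset\mathscr{C}_2(\overline{\Gamma}\times\overline{\Gamma},\zeta_N)$ and applying Lemma~\ref{lem3.1} with $\Delta=d^{t_N}+1$ and $h=O(d^{t_N})$ (via Proposition~\ref{prop2.4}), and the same absorption of the prefactor $l\,B(k,t_N)^{l+1}/t_N=\exp(O_l(t_N))$ into the triple exponential; you even record the sharper exponent $2\log d+o(1)$ before relaxing to the stated $4\log d+o(1)$, and your reading of the (misprinted) bound on $\theta_N$ as the condition $\theta_N\le\zeta_N/2$ is exactly how the paper itself uses it.

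One caveat: the only justification you add beyond the paper --- that each $\omega_i$, being a composition of non-monomials, is again not a monomial, whence $\psi_i(X)-Y$ is non-special --- is false in general: for instance $(X-1)^2\circ(X^2+1)=X^4$, so a semigroup generated by non-monomials can contain monomials (unlike the single-polynomial setting of \cite{OS}, where $f^{(n)}$ monomial forces $f$ monomial). Thus non-specialness of the curves does not follow from the stated hypotheses alone. This is, however, a gap you share with the paper rather than introduce: the paper's proofs of Theorem~\ref{th4.2} and Theorem~\ref{th6.4} apply Lemma~\ref{lem3.1} (and Lemma~\ref{lem3.2}) to arbitrary elements of $\mathcal{F}_{t}$ with the same tacit assumption and no comment, so apart from making this assumption explicit (with an incorrect one-line justification), your argument coincides with the paper's.
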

\begin{proof}
 We consider the directed graph with the elements of $\mathscr{C}(\overline{\Gamma}, \theta)$ as vertices, and edges $(x,\phi_i(x))$ for $i=1,...,k$ and $x \in \mathscr{C}(\overline{\Gamma}, \theta)$. With the notation of Section~\ref{sec5} and Lemma~\ref{lem5.1}, we let $\mathscr{C}(\overline{\Gamma}, \theta)$ take the place of $\mathcal{H}$ and $\mathcal{A}$.
 By hypothesis, we can choose $l \geq 1$  an arbitrary integer and $N$ big enough so that $t_N \geq 3l$ and $\#\{ v \in \mathscr{C}(\overline{\Gamma}, \theta), d(u,v) \leq N \} \geq 3B(k,t_N)$. From Lemma~\ref{lem5.1}, for each $N$,  there exist words $\omega_1,...,\omega_l \in \{ 1,...,k\}^*$ of length at most $t_N$, and therefore degree at most $d^{t_N}$, such that
 \begin{equation}\label{eq6.2}
  L_N(u, \mathscr{C}(\overline{\Gamma}, \theta); \omega_1,...,\omega_l) \gg_l  \dfrac{t_N}{B(k,t_N)^{l+1}} \# \{v \in \mathcal{V}(\mathscr{C}(\overline{\Gamma}, \theta)) : d(u,v) \leq N \}.
 \end{equation} Putting $\Delta_{t_N}=d^{t_N} +1$ and $h_N= h(\mathcal{F}_{t_N})$, we have $h=O(\Delta_{t_N})$ by Proposition~\ref{prop2.4}.
 Defining $\zeta_N$ as in Lemma~\ref{lem3.1} with parameters $h_N, \Delta_{t_N}$ we have that
 \begin{align*}\zeta^{-1}= \exp(2 \Delta_{t_N}^2 + O(1))\Delta_{t_N}^{7r + 23}(\log \Delta_{t_N})^6
 =O(\exp(d^{2t_N})d^{t_N(7r+23)}(t_N\log d)^6).
  \end{align*}
  As $\theta_N\leq \zeta_N/2= O\left(\left(\exp(d^{2t_N})d^{t_N(7r+23)}(t_N\log d)^6\right)^{-1}\right)$, for $N$ large enough, it is true  that
  \begin{center}
 $\mathscr{C}(\overline{\Gamma}, \theta_N)^2 \subset \mathscr{C}_2(\overline{\Gamma} \times \overline{\Gamma}, \zeta)$.
\end{center}By Lemma~\ref{lem3.1}, we compute
 \begin{align*}
  L_N&(u, \mathscr{C}(\overline{\Gamma}, \theta_N); \omega_1,...,\omega_l)\\&=\# \{v \in \mathcal{V}(\mathscr{C}(\overline{\Gamma}, \theta_N)) : d(u,v),d(u, \omega_i(v)) \leq N , \omega_i(v) \in \mathscr{C}(\overline{\Gamma}, \theta), i=1,...,l  \}\\
  &\leq \displaystyle\sum_{i \leq l} \# \{v \in \mathcal{V}(\mathscr{C}(\overline{\Gamma}, \theta_N)) : d(u,v),d(u, \omega_i(v)) \leq N , \omega_i(v) \in \mathscr{C}(\overline{\Gamma}, \theta_N)  \}\\
 & \leq \displaystyle\sum_{i \leq l} \# \{(x,y) \in \mathscr{C}(\overline{\Gamma}, \theta_N)^2 : y= \omega_i(x) \} \\
 & \leq \displaystyle\sum_{i \leq l} \# \{(x,y) \in \mathscr{C}_2(\overline{\Gamma} \times \overline{\Gamma}, \zeta_N) : y= \omega_i(x) \}\\
 & \leq l \exp ((h_N+1)\exp ((2 + o(1))\Delta_{t_N}^2)). 
 \end{align*} Gathering this with (\ref{eq6.2}), it follows that
 \begin{align*}
  &\# \{v \in \mathcal{V}(\mathscr{C}(\overline{\Gamma}, \theta_N)) : d(u,v) \leq N \}\\& \ll_l \dfrac{B(k,t_N)^{l+1}l}{t_N}\exp ((h_N+1)\exp ((2 + o(1))\Delta_{t_N}^2))\\
  & \leq  \dfrac{\exp (\exp( \exp ((4 \log d + o(1))t_N)))}{t_N},
 \end{align*}as we wanted to show.

\end{proof}
\begin{remark}
 If the hypothesis of  Theorem~\ref{th6.4} are satisfied with \begin{center}$t_N \sim\dfrac{1}{4 \log d + o(1)} \log \log \log \left(  \dfrac{(4 \log d + o(1)) N)}{\log \log \log N} \right)$, as $ N \rightarrow \infty$, \end{center}
 then we recover and generalize Corollary~\ref{cor4.3}, as well as Theorem 2.4 of \cite{OS}, under our referred conditions.
\end{remark}


\begin{thebibliography}{99} 


\bibitem{BGT}  J.P. Bell, D. Ghioca, T.J. Tucker: \textit{The Dynamical Mordell-Lang Conjecture}, Mathematical
Surveys and Monographs, vol. 210 (American Mathematical Society, Providence, 2016).

\bibitem{M}  J. Mello:\textit{On quantitative estimates for quasiintegral points in orbits of semigroups of rational maps}, (2019) To appear on the New York Journal of Mathematics.

\bibitem{MS}  L. M\'{e}rai and I. E. Shparlinski: \textit{Unlikely intersections over finite fields: polynomial orbits in small subgroups}, Preprint, 2019 (available from arXiv:1904.12621).

\bibitem{O}  A. Ostafe: \textit{On roots of unity in orbits of rational functions}, Proc. Amer. Math. Soc. 145, 1927–1936 (2017).

\bibitem{OS}  A. Ostafe and I. E. Shparlinski: \textit{Orbits of Algebraic Dynamical Systems in Subgroups and Subfields}, volume
 Number Theory - Diophantine Problems, Uniform Distribution and Applications (pp.347-368)

 



\bibitem{wilk}I.E. Shparlinski: \textit{Multiplicative orders in orbits of polynomials over finite fields}, Glasgow Mathematical Journal; Cambridge Vol. 60, Iss. 2,  (May 2018): 487-493.












\end{thebibliography}
\end{document}